\newtheorem{theorem}{Theorem}[section]
\newtheorem*{theorem*}{Theorem}
\newtheorem{lemma}[theorem]{Lemma}
\newtheorem{proposition}[theorem]{Proposition}
\newtheorem{corollary}[theorem]{Corollary}
\newtheorem*{corollary*}{Corollary}
\newtheorem*{conjecture*}{Conjecture}
\theoremstyle{remark}
\theoremstyle{definition}
\newtheorem{example}[theorem]{Example}
\newtheorem{remark}[theorem]{Remark}
\newtheorem{definition}[theorem]{Definition}
\newcommand{\ie}{{\em i.e.~}\ }
\newcommand{\eg}{{\em e.g.~}\ }
\newcommand{\opname}[1]{\operatorname{\mathsf{#1}}}
\renewcommand{\mod}{\opname{mod}\nolimits}
\newcommand{\proj}{\opname{proj}\nolimits}
\newcommand{\add}{\opname{add}\nolimits}
\newcommand{\thick}{\opname{thick}\nolimits}
\newcommand{\per}{\opname{per}\nolimits}
\newcommand{\id}{\mathrm{id}}
\newcommand{\Hom}{\opname{Hom}}
\newcommand{\End}{\opname{End}}
\newcommand{\ca}{{\mathcal A}}
\newcommand{\cb}{{\mathcal B}}
\newcommand{\cc}{{\mathcal C}}
\newcommand{\cd}{{\mathcal D}}
\newcommand{\ch}{{\mathcal H}}
\newcommand{\cp}{{\mathcal P}}
\newcommand{\ct}{{\mathcal T}}
\newcommand{\cx}{{\mathcal X}}
\newcommand{\udim}{\underline{\mathrm{dim}}}
\newcommand{\uDim}{\underline{\mathrm{Dim}}}
\newcommand{\Ind}{\mathrm{Ind}}
\renewcommand{\sp}{\mathrm{sp}}
\newcommand{\usum}{\underline{\mathrm{sum}}}
\newcommand{\uSum}{\underline{\mathrm{Sum}}}
\renewcommand{\leq}{\leqslant}
\renewcommand{\le}{\leqslant}
\renewcommand{\geq}{\geqslant}
\renewcommand{\ge}{\geqslant}
\numberwithin{equation}{section}
\begin{document}

\title[Derived-discreteness]{The equivalence of two discretenesses of triangulated categories}

\author{Lingling Yao}
\thanks{Lingling Yao acknowledges support from the National Natural Science Foundation of China under Grant No.11601077 and the
National Natural Science Foundation of Jiangsu Province BK20160662}
\address{
School of Mathematics, Southeast University, Nanjing
210096, China}
\email{llyao@seu.edu.cn}

\author{Dong Yang}
\address{Department of Mathematics, Nanjing University, Nanjing 210093, China}
\email{yangdong@nju.edu.cn}

\date{\today}
\maketitle

\begin{abstract}
Given an ST-triple $(\cc,\cd,M)$ one can associate a co-$t$-structure on $\cc$ and a $t$-structure on $\cd$. It is shown that the discreteness of $\cc$ with respect to the co-$t$-structure is equivalent to the discreteness of $\cd$ with respect to the $t$-structure. As a special case, the discreteness of $\cd^b(\mod A)$  in the sense of Vossieck is equivalent to the discreteness of $K^b(\proj A)$ in a dual sense, where $A$ is a finite-dimensional algebra.
\end{abstract}

\section{Introduction}

Derived-discreteness of a finite-dimensional algebra was introduced by Vossieck in~\cite{Vossieck01}. It is defined by counting the number of indecomposable objects in the bounded derived category.  Recently this notion has been generalised by Broomhead, Pauksztello and Ploog in~\cite{BroomheadPauksztelloPloog18} to a notion of discreteness of a triangulated category with respect to (the heart of) a bounded $t$-structure. In \cite{BroomheadPauksztelloPloog18} they also introduced a dual notion, namely, the notion of discreteness of a triangulated category with respect to a bounded co-$t$-structure (equivalently, a silting subcategory).

It turns out that ST-triples introduced in \cite{AdachiMizunoYang18} provide a nice framework to study the interplay between $t$-structures and co-$t$-structures. Let $\cc$ and $\cd$ be triangulated categories and $M$ a silting object of $\cc$ such that $(\cc,\cd,M)$ is an ST-triple. Then on $\cd$ there is a natural bounded $t$-structure, say, with heart $\ch$. Our main result is

\begin{theorem*}[\ref{thm:two-discretenesses}]\label{thm:main-thm}
The category $\cc$ is $M$-discrete if and only if the category $\cd$ is $\ch$-discrete.
\end{theorem*}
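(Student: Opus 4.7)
The plan is to exhibit a correspondence, coming from the ST-triple, between indecomposable objects of $\cc$ and of $\cd$ under which the ``$M$-truncation data'' on the $\cc$-side and the ``$\ch$-cohomology data'' on the $\cd$-side agree term by term. Given such a correspondence the theorem becomes tautological: both notions of discreteness assert that for every finitely supported datum in $\bigoplus_{j\in\Z}K_0^{\mathrm{split}}(\add M)\cong\bigoplus_{j\in\Z}K_0(\ch)$ only finitely many indecomposables realise it, so matching the data matches the counts.

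To set up the correspondence, I would first unpack the functorial content of an ST-triple recalled from~\cite{AdachiMizunoYang18}. The silting object $M=M_1\oplus\cdots\oplus M_n$ yields a heart $\ch$ of $\cd$ whose simple objects $S_1,\ldots,S_n$ are in natural bijection with the $M_i$, giving a canonical isomorphism $K_0^{\mathrm{split}}(\add M)\cong K_0(\ch)$ sending $[M_i]\mapsto[S_i]$. Under this identification, the co-$t$-structure on $\cc$ with co-heart $\add M$ and the $t$-structure on $\cd$ with heart $\ch$ are in duality: truncating with respect to the co-$t$-structure peels off layers in $\add M$ in $\cc$, while truncating with respect to the $t$-structure peels off cohomologies in $\ch$ in $\cd$.

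The central step is then the comparison of invariants. For $X\in\cc$, write $v_j(X)\in\Z^n_{\geq 0}$ for the multiplicity vector in $\add M$ of the $j$-th silting layer of $X$ relative to the co-$t$-structure; for $Y\in\cd$, write $w_j(Y)\in\Z^n_{\geq 0}$ for the dimension vector of $H^j(Y)\in\ch$. I would prove, by induction on the width of the filtration and by a diagram chase on the distinguished triangles defining the two truncations, that the ST-triple induces a correspondence between indecomposables of $\cc$ and of $\cd$ satisfying $v_j(X)=w_j(Y)$ whenever $X\leftrightarrow Y$. The base case is the tautological identification $M_i\leftrightarrow S_i$, and the inductive step uses the compatibility of both truncation functors with the ST-triple.

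The hard part will be exactly this last compatibility: verifying that the silting truncation on the $\cc$-side and the cohomological truncation on the $\cd$-side really do commute, in a suitable sense, with the ST-triple functor, so that the two filtrations of an indecomposable line up term by term under the correspondence. Even in the motivating example $\cc=K^b(\proj A)$, $\cd=\cd^b(\mod A)$, $M=A$, this is a nontrivial statement about how an indecomposable perfect complex decomposes under taking cohomology modules, and it is here that the fine structure of the ST-triple is essential. Once this compatibility is secured, the passage from finiteness of fibres on one side to finiteness on the other is pure bookkeeping.
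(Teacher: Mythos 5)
There is a genuine gap, and it sits exactly where you locate ``the hard part'': the term-by-term compatibility $v_j(X)=w_j(Y)$ that your whole plan rests on is false. Although the indecomposable summands $M_i$ of $M$ and the simple objects $S_i$ of $\cd^0$ are indeed in bijection, so that $K_0^{\sp}(\add(M))\cong K_0(\cd^0)$ as abstract free abelian groups, the invariants $\uSum$ on $\cc$ and $\uDim$ on $\cd$ do \emph{not} correspond under this isomorphism, and there is in general no bijection between indecomposables of $\cc$ and of $\cd$ at all. Already for $\cc=K^b(\proj A)$, $\cd=\cd^b(\mod A)$, $M=A_A$ with $A$ of infinite global dimension, the two categories have genuinely different sets of indecomposables (a simple module of infinite projective dimension lies in $\cd$ but not in $\cc$); and even on a common object such as $X=P_i$ the two data disagree: $\uSum(P_i)$ is the basis vector $[P_i]$ concentrated in degree $0$, which your identification sends to $[S_i]$, whereas $\uDim(P_i)=\udim(P_i)$ records \emph{all} composition factors of $P_i$. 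So no invariant-preserving correspondence exists and the ``tautological'' transfer of finiteness never gets off the ground.

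The actual proof is weaker and asymmetric, and that asymmetry is the real content. For each fixed value of the invariant one produces an \emph{injection} in each direction separately, controlling the invariant only up to an inequality and then invoking Lemma~\ref{lem:equivalent-condition-for-D-discreteness} and its dual to pass from ``$\leqslant x$'' back to ``$=x$''. One direction uses the map $Y\mapsto\sigma^{\geq l}(Y)$, which is injective on isoclasses of indecomposables because $\sigma^{\geq p}$ is fully faithful on $\ct_{\geq p}$ (Proposition~\ref{prop:faithful-truncation}), and satisfies $\uDim(\sigma^{\geq l}(Y))\leqslant\varphi_l(\uSum(Y))$ by subadditivity of $\uDim$ along the $\add(M)$-tower of $Y$; this embeds $\Ind_u\cc$ into a finite set $\Ind^{\leqslant x}\cd$. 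The other direction uses $X\mapsto\beta_{\geq l-1}(X)$, and here the key fact --- which no diagram chase on truncation triangles will give you, since $\beta_{\geq l-1}$ is not even a functor --- is that $\End_\ct(\beta_{\geq l-1}(X))\to\End_\ct(X)$ is surjective with kernel in the radical (Proposition~\ref{prop:hard-truncation-preserves-indecomposability}), so that indecomposability is preserved and isomorphism is detected; the bound $\uSum(\beta_{\geq l-1}(X))\leqslant\psi_l(\uDim(X))$ is then proved by induction, peeling off simple subobjects of $\sigma^l(X)$. If you want to salvage your approach, you should abandon the matching of invariants and prove these two one-sided statements instead.
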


In the literature there are another two notions of discreteness of triangulated categories, namely, silting-discreteness \cite{Aihara13} and $t$-discreteness \cite{AdachiMizunoYang18}, defined by counting the number of silting objects and bounded $t$-structures, respectively. In \cite{AdachiMizunoYang18} it is shown that $\cc$ is silting-discrete if and only $\cd$ is $t$-discrete, and that if $\cd$ is $\ch$-discrete then $\cd$ is $t$-discrete. Together with these results Theorem~\ref{thm:two-discretenesses} implies the following corollary, which completes the picture.

\begin{corollary*}[{\ref{cor:discreteness-implies-silting-discreteness}}]
If $\cc$ is $M$-discrete, then $\cc$ is silting-discrete.
\end{corollary*}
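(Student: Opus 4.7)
The plan is to chain together three results, two of which are already available from \cite{AdachiMizunoYang18} and the third being the main theorem of the present excerpt. Since $(\cc,\cd,M)$ is an ST-triple, I can freely transport discreteness conditions from one side to the other using these three bridges.

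First, I would apply Theorem~\ref{thm:two-discretenesses}: the assumed $M$-discreteness of $\cc$ immediately yields $\ch$-discreteness of $\cd$, where $\ch$ is the heart of the natural bounded $t$-structure on $\cd$ supplied by the ST-triple framework. Second, I would invoke the implication recorded in the introductory discussion, due to \cite{AdachiMizunoYang18}: $\ch$-discreteness of $\cd$ implies $t$-discreteness of $\cd$ (the intuition being that an $\ch$-discrete heart restricts the possible hearts of other bounded $t$-structures, since tilting or HRS-tilting can produce only countably many new hearts when the original heart has only finitely many indecomposables up to shift in a suitable sense). Third, I would invoke the equivalence, also from \cite{AdachiMizunoYang18}, that $\cd$ is $t$-discrete if and only if $\cc$ is silting-discrete; reading this in one direction transports $t$-discreteness of $\cd$ back to silting-discreteness of $\cc$.

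Concatenating these three steps gives the desired implication
\[
\cc \text{ is } M\text{-discrete} \;\Longrightarrow\; \cd \text{ is } \ch\text{-discrete} \;\Longrightarrow\; \cd \text{ is } t\text{-discrete} \;\Longrightarrow\; \cc \text{ is silting-discrete},
\]
which is exactly the statement of Corollary~\ref{cor:discreteness-implies-silting-discreteness}. No genuine obstacle remains at this stage, because all the nontrivial content has been packaged either into Theorem~\ref{thm:two-discretenesses} or into the cited results of \cite{AdachiMizunoYang18}; the corollary is a formal composition. If any subtlety arises, it would be a bookkeeping check that the ST-triple hypotheses in force for Theorem~\ref{thm:two-discretenesses} are the same ones needed to apply the two results from \cite{AdachiMizunoYang18}, which is immediate from the shared setup.
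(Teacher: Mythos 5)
Your proposal is correct and matches the paper's proof exactly: the paper also deduces $\cd^0$-discreteness of $\cd$ from Theorem~\ref{thm:two-discretenesses} and then cites the two results of \cite{AdachiMizunoYang18} ($\ch$-discreteness implies $t$-discreteness, and $t$-discreteness of $\cd$ is equivalent to silting-discreteness of $\cc$) to conclude. The corollary is indeed a formal composition of these three implications, just as you describe.
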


The paper is structured as follows. In Section~\ref{s:prelinimaries} we fix the notion and briefly recall the definitions of $t$-structure, silting object and co-$t$-structure. In Section~\ref{s:ST-triple-and-discreteness} we recall the definitions of ST-triple and discreteness of triangulated categories. In Section~\ref{s:equivalence} we prove Theorem~\ref{thm:two-discretenesses}. In the final section we apply Theorem~\ref{thm:two-discretenesses} to finite-dimensional algebras to recover a result of Qin \cite{Qin16} stating that derived-discreteness in the sense of Vossieck \cite{Vossieck01} is preserved under decollement.

Throughout let $k$ be an algebraically closed field. We use $\Sigma$ to denote the shift functors of all triangulated categories.

\section{Preliminaries}\label{s:prelinimaries}
The aim of this section is mainly to briefly recall the definitions of $t$-structures, silting object and co-$t$-structure and fix the notation we will use in the paper.

\subsection{Triangulated categories}\label{ss:triangulated-categories}
Let $A$ be a finite-dimensional $k$-algebra. Denote by  $\mod A$ the category of finite-dimensional (right) $A$-modules and by $\proj A$ its full subcategory of finitely generated projective $A$-modules. Denote by $K^b(\proj A)$ the bounded homotopy category of $\proj A$ and by $\cd^b(\mod A)$ the bounded derived category of $\mod A$. These are two triangulated $k$-categories with shift functor being the shift of complexes.

\smallskip
Let $\ct$ be a triangulated $k$-category. For two subcategories $\ca$ and $\cb$, let $\ca\ast\cb$ be the full subcategory of $\ct$ consisting of objects $X$ with a triangle $X'\to X\to X''\to \Sigma X'$, where $X'\in\ca$ and $X''\in\cb$. We will often identify an object with the full subcategory consisting of this unique object. A full subcategory of $\ct$ is said to be \emph{thick} if it is closed under shifts, cones and direct summands. For an object $X$ of $\ct$ denote by $\add(X)$ the smallest additive subcategory of $\ct$ containing $X$ and closed under direct summands, and by $\thick(X)$ the smallest thick subcategory of $\ct$ containing $X$. Assume that $\ct$ has arbitrary coproducts. An object $X$ of $\ct$ is said to be \emph{compact} if the canonical map $\bigoplus_{i\in I}\Hom_\ct(X,Y_i)\to\Hom_\ct(X,\bigoplus_{i\in I}Y_i)$ is an isomorphism for any set $\{Y_i|i\in I\}$ of objects of $\ct$; it is called a \emph{compact generator} if in addition $\ct$ coincides with its smallest triangulated category containing $X$ and closed under coproducts.

\subsection{Grothendieck groups}
\label{ss:grothendieck-groups}

Let $\ch$ be an abelian $k$-category with only finitely many isoclasses (=isomorphism classes) of  simple objects  such that all objects of $\ch$ are filtered by simple objects (\eg $\mod A$, where $A$ is a finite-dimensional $k$-algebra). The Grothendieck group $K_0(\ch)$ of $\ch$ is the abelian group generated by isoclasses of objects in $\ch$ modulo the relations $[M]+[N]-[L]$ whenever there is a short exact sequence $0\to M\to L\to N\to 0$. For $M\in\ch$ denote by $\udim(M)$ the class of $M$ in $K_0(\ch)$. Let $K_0(\ch)^+$ be the subset of $K_0(\ch)$ consisting of classes of objects in $\ch$.  Then $K_0(\ch)$ is a free abelian group with basis the classes of simple objects, and in terms of this basis elements of $K_0(\ch)^+$ are precisely those with non-negative coefficients. 

\smallskip
Let $\ca$ be a Hom-finite Krull--Schmidt additive $k$-category such that $\ca=\add(M)$ for some $M\in \ca$ (\eg $\proj A$, where $A$ is a finite-dimensional $k$-algebra). The split Grothendieck group $K^{\sp}_{0}(\ca)$ of $\ca$ is the abelian group generated by the isoclasses of objects of $\ca$ modulo the relations $[L]+[N]-[L\oplus N]$. For $N\in \ca$, denote by $\usum(N)$ the class of $N$ in $K^{\sp}_{0}(\ca)$. Let $(K^{\sp}_{0}(\ca))^+$ be the subset of $K^{\sp}_0(\ca)$ consisting of classes of objects of $\ca$. Then $K^{\sp}_{0}(\ca)$ is a free abelian group with basis the classes of indecomposable direct summands of $M$, and in terms of this basis elements of ${K^{\sp}_{0}(\ca)}^+$ are precisely those with non-negative coefficients.

\subsection{$t$-structures}\label{ss:t-str}
Let $\ct$ be a triangulated $k$-category. 

A \emph{$t$-structure} on 
$\ct$ (\cite{BeilinsonBernsteinDeligne82}) is a pair $(\ct^{\leq
0},\ct^{\geq 0})$ of strict (that is, closed under
isomorphisms) and full  subcategories of $\ct$ such that, putting $\ct^{\geq p}=\Sigma^{-p}\ct^{\geq 0}$ and $\ct^{\leq p}=\Sigma^{-p}\ct^{\leq 0}$ for $p\in\mathbb{Z}$, we have
\begin{itemize}
\item[(1)] $\ct^{\leq -1}\subseteq\ct^{\leq 0}$ and
$\ct^{\geq 1}\subseteq\ct^{\geq 0}$;
\item[(2)] $\Hom(X,Y)=0$ for $X\in\ct^{\leq 0}$
and $Y\in\ct^{\geq 1}$,
\item[(3)] for each $X\in\ct$ there is a triangle $X'\rightarrow
X\rightarrow X''\rightarrow\Sigma X'$ in $\ct$ with $X'\in\ct^{\leq
0}$ and $X''\in\ct^{\geq 1}$.
\end{itemize}
The \emph{heart} $\ct^0:=\ct^{\leq 0}\cap\ct^{\geq 0}$ is
always abelian. The
$t$-structure $(\ct^{\leq 0},\ct^{\geq 0})$ is said to be
\emph{bounded} if
\[
\bigcup_{p\in\mathbb{Z}}\ct^{\leq
p}=\ct=\bigcup_{p\in\mathbb{Z}}\ct^{\geq p},
\] 
or equivalently, $\ct=\thick(\ct^0)$.

Let $A$ be a finite-dimensional $k$-algebra. Let $\cd^{\leq 0}$ (respectively, $\cd^{\geq 0}$) be the full subcategory of the bounded derived category $\cd^b(\mod A)$ consisting of complexes with
vanishing cohomologies in positive degrees (respectively, in negative
degrees). Then $(\cd^{\leq 0},\cd^{\geq 0})$ is a bounded $t$-structure on $\cd^b(\mod A)$ with heart the full subcategory of complexes with cohomology concentrated in degree $0$, which is canonically equivalent to $\mod A$.

\smallskip
It is easy to see that for every integer $p$, the pair $(\ct^{\le p},\ct^{\ge p})$ is also a $t$-structure and the category $\ct^{p}:=\ct^{\le p}\cap \ct^{\ge p}$ is the heart. By the condition (3) in the above definition, for $X\in\ct$ there is a triangle $X'\to X\to X''\to \Sigma X'$ with $X'\in\ct^{\leq p}$ and $X''\in\ct^{\geq p+1}$. This triangle is unique up to a unique isomorphism, so the correspondences $X\mapsto X'$ and $X\mapsto X''$ extend to functors 
\[
\sigma^{\le p}\colon  \ct \to \ct^{\le p}\ \textnormal{and}\ \sigma^{\ge p+1}\colon  \ct \to \ct^{\ge p+1},
\]
respectively, called the \emph{truncation functors}. Moreover, we have the set of \emph{cohomology functors} \[
\{\sigma^p=\Sigma^{p}\sigma^{\leq p}\sigma^{\geq p}\colon \ct\to \ct^0\mid p\in\mathbb{Z}\},
\] 
which is cohomological, \ie takes triangles to long exact sequences. The next result follows directly from the definition of $\sigma^{\geq p}$ on morphisms.

\begin{lemma}\label{lem:property-of-truncations-on-morphisms}
The map $\sigma^{\geq p}(X,Y)\colon \Hom_\ct(X,Y)\to\Hom_\ct(\sigma^{\geq p}X,\sigma^{\geq p}Y)$ 
\begin{itemize}
\item[$\cdot$] is injective if $\Hom_\ct(X,\sigma^{\leq p-1}Y)=0$;
\item[$\cdot$] is surjective if $\Hom_\ct(X,\Sigma\sigma^{\leq p-1}Y)=0$;
\item[$\cdot$] has kernel $\{f\colon X\to Y\mid f \text{ factors through the morphism } \sigma^{\leq p-1}(Y)\to Y\}$.
\end{itemize}
\end{lemma}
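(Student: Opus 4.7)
The plan is to derive all three conclusions from the long exact sequence obtained by applying the cohomological functor $\Hom_\ct(X,-)$ to the truncation triangle of $Y$,
\[
\sigma^{\leq p-1}Y \to Y \to \sigma^{\geq p}Y \to \Sigma \sigma^{\leq p-1}Y.
\]
This yields the exact sequence
\[
\Hom(X,\sigma^{\leq p-1}Y) \to \Hom(X,Y) \xrightarrow{\beta} \Hom(X,\sigma^{\geq p}Y) \to \Hom(X,\Sigma\sigma^{\leq p-1}Y),
\]
where $\beta$ is post-composition with the canonical morphism $Y \to \sigma^{\geq p}Y$. The first step is to identify $\beta$, up to an isomorphism of the target, with $\sigma^{\geq p}(X,Y)$.

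To produce such an isomorphism, I would apply $\Hom_\ct(-,\sigma^{\geq p}Y)$ to the truncation triangle of $X$. The groups $\Hom(\sigma^{\leq p-1}X,\sigma^{\geq p}Y)$ and $\Hom(\Sigma\sigma^{\leq p-1}X,\sigma^{\geq p}Y)$ both vanish: $\sigma^{\leq p-1}X$ lies in $\ct^{\leq p-1}$ and $\Sigma\sigma^{\leq p-1}X$ lies in $\ct^{\leq p-2}\subseteq\ct^{\leq p-1}$, while $\sigma^{\geq p}Y\in\ct^{\geq p}$, so axiom (2) of a $t$-structure applies. Consequently precomposition with $X\to\sigma^{\geq p}X$ induces an isomorphism $\Hom(\sigma^{\geq p}X,\sigma^{\geq p}Y)\cong\Hom(X,\sigma^{\geq p}Y)$. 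The naturality square relating $f\colon X\to Y$ with $\sigma^{\geq p}f\colon\sigma^{\geq p}X\to\sigma^{\geq p}Y$, which follows from the uniqueness of the truncation triangle, shows that this isomorphism carries $\sigma^{\geq p}f$ to $\beta(f)$, thereby identifying $\sigma^{\geq p}(X,Y)$ with $\beta$.

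Given this identification, the three statements are read off from the long exact sequence. By exactness, the kernel of $\sigma^{\geq p}(X,Y)$ equals the image of $\Hom(X,\sigma^{\leq p-1}Y)\to\Hom(X,Y)$, which is precisely the set of morphisms $X\to Y$ factoring through $\sigma^{\leq p-1}Y\to Y$; this proves the third bullet. Injectivity under the hypothesis $\Hom(X,\sigma^{\leq p-1}Y)=0$ is then immediate. Surjectivity under the hypothesis $\Hom(X,\Sigma\sigma^{\leq p-1}Y)=0$ likewise follows by exactness at $\Hom(X,\sigma^{\geq p}Y)$.

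The argument is entirely formal, and the only mildly delicate point, which I expect to be the main obstacle, is the functoriality check identifying $\beta$ with $\sigma^{\geq p}(X,Y)$; it rests on the naturality of the transformation $\id\to\sigma^{\geq p}$ inherited from the uniqueness of the truncation triangle, and everything else reduces to two applications of axiom (2).
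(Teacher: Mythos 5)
Your proof is correct, and it is exactly the argument the paper has in mind: the paper gives no written proof, merely asserting that the lemma ``follows directly from the definition of $\sigma^{\geq p}$ on morphisms,'' and your two applications of the long exact sequence (one to identify $\sigma^{\geq p}(X,Y)$ with post-composition $Y\to\sigma^{\geq p}Y$ via the vanishing of $\Hom(\sigma^{\leq p-1}X,\sigma^{\geq p}Y)$ and $\Hom(\Sigma\sigma^{\leq p-1}X,\sigma^{\geq p}Y)$, one to read off kernel and cokernel) are precisely the details being suppressed. Nothing to add.
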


\subsection{Silting objects and co-$t$-structures}\label{ss:silting}

Let $\ct$ be a triangulated $k$-category. 

\smallskip
An object $M$ of $\ct$ is said to be \emph{presilting} if $\Hom_{\ct}(M,\Sigma^p M)=0$ for all positive integers $p$, and
\emph{silting} if in addition $\ct=\thick(M)$. See \cite{KellerVossieck88,AssemSoutoTrepode08,AiharaIyama12}. Let $A$ be a finite-dimensional $k$-algebra.
Then the free $A$-module $A_A$ of rank $1$ is a silting object of the bounded homotopy category $K^b(\proj A)$.

\smallskip

A \emph{co-$t$-structure} on
$\ct$ \cite[Definition 2.4]{Pauksztello08}  (or \emph{weight structure} in~\cite{Bondarko10}) is a pair $(\ct_{\geq 0},\ct_{\leq
0})$ of strict and full subcategories of $\ct$ such that, putting $\ct_{\geq p}=\Sigma^{-p}\ct_{\geq 0}$ and $\ct_{\leq p}=\Sigma^{-p}\ct_{\leq 0}$ for $p\in\mathbb{Z}$, we have
\begin{itemize}
\item[(0)] both $\ct_{\geq 0}$ and $\ct_{\leq
0}$ are additive and closed under taking direct summands;
\item[(1)] $\ct_{\geq 1}\subseteq\ct_{\geq 0}$ and
$\ct_{\leq -1}\subseteq\ct_{\leq 0}$;
\item[(2)] $\Hom(X,Y)=0$ for $X\in\ct_{\geq 1}$ and $Y\in \ct_{\leq 0}$;
\item[(3)] for each $X\in\ct$ there is a triangle $X'\rightarrow
X\rightarrow X''\rightarrow\Sigma X'$ in $\ct$ with $X'\in\ct_{\geq
1}$ and $X''\in\ct_{\leq 0}$.
\end{itemize}
The intersection $\ct_{\geq
0}\cap\ct_{\leq 0}$ is called the \emph{co-heart} of the co-$t$-structure $(\ct_{\geq 0},\ct_{\leq 0})$. 
A co-$t$-structure $(\ct^{\leq 0},\ct^{\geq
0})$ is said to be \emph{bounded}~\cite{Bondarko10} if
\[
\bigcup\nolimits_{p\in\mathbb{Z}}\ct_{\leq
p}=\ct=\bigcup\nolimits_{p\in\mathbb{Z}}\ct_{\geq p},
\]
or equivalently, $\ct=\thick(\ct_{\geq
0}\cap\ct_{\leq 0})$.

Let $A$ be a finite-dimensional $k$-algebra. Let $\cp_{\geq 0}$ (respectively, $\cp_{\leq 0}$) be the full subcategory of $K^b(\proj A)$ consisting of objects isomorphic to complexes with trivial components in negative degrees (respectively, in positive degrees). Then $(\cp_{\geq 0},\cp_{\leq 0})$ is a bounded co-$t$-structure on $K^b(\proj A)$ with co-heart $\add(A)$, which is canonically equivalent to $\proj A$.

\section{ST-triples and discreteness}
\label{s:ST-triple-and-discreteness}

In this section we recall the definition of ST-triple from \cite{AdachiMizunoYang18} and two notions of discreteness of triangulated categories from \cite{BroomheadPauksztelloPloog18}; moreover, we show that `compact silting objects' naturally produce ST-triples, and establish some auxiliary results which we will use in Section~\ref{s:equivalence}.

\subsection{ST-triples}
Let $\ct$ be a triangulated $k$-category. 

An \emph{ST-triple} inside $\ct$ \cite[Definition 4.3]{AdachiMizunoYang18} is a triple $(\cc,\cd,M)$, where $\cc$ and $\cd$ are thick subcategories of $\ct$ and $M$ is a silting object of $\cc$, such that 
\begin{itemize}
\item[(ST1)]  $\Hom_{\ct}(M,T)$ is finite-dimensional for any object $T$ of $\ct$, 
\item[(ST2)] $(\ct^{\le 0}, \ct^{\ge 0})$ is a $t$-structure on $\ct$, where for an integer $p$
\begin{eqnarray*}
\ct^{\le p}\hspace{-5pt}&:=\hspace{-5pt}&\{ X\in \ct \mid \Hom_{\ct}(M,\Sigma^m X)=0 ~\forall~m>p\},\\
\ct^{\ge p}\hspace{-5pt}&:=\hspace{-5pt}&\{ X\in \ct \mid \Hom_{\ct}(M,\Sigma^m X)=0 ~\forall~m<p\},
\end{eqnarray*}
\item[(ST3)] $\ct=\bigcup_{p\in\mathbb{Z}}\ct^{\leq p}$ and $\cd=\bigcup_{p\in\mathbb{Z}}\ct^{\geq p}$.
\end{itemize}

A prototypical example of an ST-triple is the triple $(K^b(\proj A),\cd^b(\mod A),A_A)$ inside $\cd^b(\mod A)$. Note, however, that in general $\cc$ and $\cd$ are not comparable, see \cite[the paragraph after Definition 4.3]{AdachiMizunoYang18}.

Let $\ca$ be a triangulated $k$-category with arbitrary coproducts. Assume that $M$ is a compact generator of $\ca$ such that $\Hom_\ca(M,\Sigma^p M)$ is finite-dimensional for all $p\in\mathbb{Z}$ and vanishes for all $p>0$. Put
\begin{align*}
\ca^c&=\thick(M),\\
\ca_{fd}&=\{X\in\ca\mid \bigoplus_{p\in\mathbb{Z}}\Hom_\ca(M,\Sigma^p X)\text{ is finite-dimensional}\},\\
\ca_{fd}^-&=\{X\in\ca\mid \Hom_\ca(M,\Sigma^p X)\text{ is finite-dimensional for all $p\in\mathbb{Z}$}\\ &\qquad \text{and vanishes for }p\gg 0\}.
\end{align*}
 All $\ca^c$, $\ca_{fd}$ and $\ca_{fd}^-$ are thick subcategories of $\ca$. By \cite[Theorem 3.4]{Keller06}, the category $\ca^c$ is precisely the subcategory of compact objects of $\ca$. Thus it is independent of the choice of $M$. It follows that $\ca_{fd}$ and $\ca_{fd}^-$ are independent of the choice of $M$ as well. Note that $\ca^c$ and $\ca_{fd}$ are contained in $\ca_{fd}^-$.

\begin{proposition}\label{prop:ST-triple-from-compact-silting}
Keep the notation and assumptions in the preceding paragraph.
Then 
\begin{itemize}
\item[(a)] both $\ca^c$ and $\ca_{fd}$ are Hom-finite and Krull--Schmidt,
\item[(b)] $(\ca^c,\ca_{fd},M)$ is an ST-triple inside $\ca_{fd}^-$.
\end{itemize}
\end{proposition}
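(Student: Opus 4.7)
The plan is to establish the two parts in order, reducing where possible to computations in the derived category of the dg endomorphism algebra $B=\RHom_\ca(M,M)$ via Keller's theorem identifying $\ca$ with $\cd(B)$. The hypotheses on $M$ translate to $H^p(B)=0$ for $p>0$, $H^p(B)$ finite-dimensional for all $p$, and $H^0(B)=\End_\ca(M)$ finite-dimensional.

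For part (a), I address Hom-finiteness first. For $\ca^c=\thick(M)$, I run a two-step d\'evissage. Let $\cx$ be the full subcategory of $\ca$ consisting of objects $X$ such that $\Hom_\ca(X,\Sigma^p M)$ is finite-dimensional for every $p\in\Z$. Then $M\in\cx$ by hypothesis, and $\cx$ is visibly closed under shifts and direct summands, while closure under cones follows from the long exact sequence of a triangle. Hence $\thick(M)\subseteq\cx$. A symmetric argument in the second variable yields $\Hom_\ca(X,Y)$ finite-dimensional for all $X,Y\in\ca^c$. For $\ca_{fd}$, I invoke Keller's equivalence to identify $\ca_{fd}$ with the subcategory of dg $B$-modules of finite-dimensional total cohomology, on which the standard $t$-structure restricts to a bounded one with heart $\fdmod H^0(B)$. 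By induction on the cohomological amplitude and the long exact sequence of a truncation triangle, the question reduces to showing $\Hom_\ca(X,\Sigma^p Y)$ is finite-dimensional when $X,Y$ lie in the heart, which follows by resolving $X$ by a complex of copies of $B$ having finite-dimensional components in each degree, using the finite-dimensionality of the individual $H^p(B)$.

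The Krull--Schmidt property for both categories then follows from the standard fact that a $k$-linear, Hom-finite, idempotent-complete additive category is Krull--Schmidt: every endomorphism ring is a finite-dimensional $k$-algebra, hence semiperfect. Idempotent completeness of $\ca^c=\thick(M)$ is built into the definition, and $\ca_{fd}$ inherits it as a thick subcategory of $\ca$.

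For part (b), I verify the three axioms directly. Axiom (ST1) is immediate: for $T\in\ca_{fd}^-$ the space $\Hom_\ca(M,T)$ is finite-dimensional by the very definition of $\ca_{fd}^-$. For (ST2), I transport the standard $t$-structure on $\cd(B)$ along Keller's equivalence to a $t$-structure on $\ca$ whose aisle is cut out by vanishing of $\Hom_\ca(M,\Sigma^m(-))$ for $m>0$; the key check is that this $t$-structure restricts to $\ca_{fd}^-$, which follows from the long exact sequences obtained by applying $\Hom_\ca(M,-)$ to the truncation triangles, together with the finite-dimensionality and eventual vanishing built into the definition of $\ca_{fd}^-$. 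For (ST3), the equality $\ca_{fd}^-=\bigcup_{p\in\Z}\ca_{fd}^{-,\le p}$ is precisely the defining condition that $\Hom_\ca(M,\Sigma^m T)=0$ for $m$ sufficiently large, while $\ca_{fd}=\bigcup_{p\in\Z}\ca_{fd}^{-,\ge p}$ holds because $T\in\ca_{fd}^-$ lies in $\ca_{fd}$ exactly when $\Hom_\ca(M,\Sigma^m T)$ also vanishes for $m$ sufficiently small, so that the graded sum is finite-dimensional. The main obstacle is the Hom-finiteness of $\ca_{fd}$: its defining condition controls only Homs out of $M$, so obtaining finiteness of $\Hom_\ca(X,Y)$ for general $X,Y\in\ca_{fd}$ requires the reduction to the dg algebra $B$ and the resolution argument sketched above, where the bulk of the technical work lies.
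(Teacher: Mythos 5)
Your proof is correct, but it takes a genuinely different route for part (a) and a mildly different one for (ST2). The paper disposes of (a) in one line by deducing it from (b) together with the general fact (Theorem~\ref{thm:basic-properties-of-ST-triple}(a), quoting \cite{AdachiMizunoYang18}) that both members of any ST-triple are Hom-finite and Krull--Schmidt; you instead prove (a) directly, by d\'evissage for $\ca^c=\thick(M)$ and by truncation plus a semi-free resolution argument for $\ca_{fd}$. Your direct argument is essentially the content hidden inside the cited result (it is the same mechanism as Lemma~\ref{lem:minimal-resolution}), so what you gain is self-containedness at the cost of redoing work the paper outsources; one small imprecision is that the components of your resolution are finite direct sums of copies of $B$ rather than finite-dimensional spaces, and one should say that the resolution is semi-free so that it computes morphisms in $\cd(B)$ --- neither point affects correctness. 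For (ST2) the paper invokes \cite[Theorem 1.3]{HoshinoKatoMiyachi02} to get the $t$-structure on all of $\ca$, whereas you transport the standard $t$-structure along Keller's equivalence $\ca\simeq\cd(B)$ with $B=\RHom_\ca(M,M)$ (implicitly replacing $B$ by a non-positive truncation, which the hypothesis $H^p(B)=0$ for $p>0$ permits); these are interchangeable, and your subsequent restriction of the $t$-structure to $\ca_{fd}^-$ via the long exact sequences of truncation triangles is exactly the paper's argument. Your treatments of (ST1) and (ST3) coincide with the paper's.
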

\begin{proof}  (a) follows from (b) by Theorem~\ref{thm:basic-properties-of-ST-triple}(a) below.
It is clear that $M$ is a silting object of $\ca^c$. Let us prove (b) by verifying the three conditions in the definition of an ST-triple.

(ST1) This is true by the definition of $\ca_{fd}^-$.

(ST2) Put
\begin{align*}
\ca^{\leq 0}&=\{X\in\ca\mid \Hom_\ca(M,\Sigma^p X)=0 ~\forall~p>0\},\\
\ca^{\geq 0}&=\{X\in\ca\mid \Hom_\ca(M,\Sigma^p X)=0~\forall~p<0\}.
\end{align*}
Then by \cite[Theorem 1.3]{HoshinoKatoMiyachi02} (cf. also \cite[Proposition 2.8]{BeligiannisReiten07} and \cite[Corollary 4.7]{AiharaIyama12}), $(\ca^{\leq 0},\ca^{\geq 0})$ is a $t$-structure on $\ca$. For $X\in\ca$, consider the triangle 
\[
\xymatrix{X'\ar[r] & X\ar[r] & X''\ar[r] &\Sigma X'}
\]
with $X'\in\ca^{\leq 0}$ and $X''\in\ca^{\geq 1}$. Then by applying the functor $\Hom_\ca(M,?)$ to this triangle we obtain isomorphisms
\begin{align*}
\Hom_\ca(M,\Sigma^p X)&\cong \Hom_\ca(M,\Sigma^p X'') ~\forall p\geq 1,\\
\Hom_\ca(M,\Sigma^p X)&\cong \Hom_\ca(M,\Sigma^p X') ~\forall p\leq 0.
\end{align*}
As a consequence, if $X$ belongs to $\ca_{fd}^-$, so do $X'$ and $X''$. Therefore $(\ca_{fd}^{-,\leq 0},\ca_{fd}^{-,\geq 0})$ is a $t$-structure on $\ca_{fd}^-$, where $\ca_{fd}^{-,\leq 0}=\ca_{fd}^-\cap\ca^{\leq 0}$ and $\ca_{fd}^{-,\geq 0}=\ca_{fd}^-\cap\ca^{\geq 0}$ are the categories defined in the definition of an ST-triple.

(ST3) This is clear from the definitions of the involved categories.
\end{proof}

For a dg (=differential graded) $k$-algebra $A$,  it is known that the derived category $\cd(A)$ of dg $A$-modules (\cite{Keller94}) has arbitrary coproducts and is compactly generated by $A_A$, see \cite[Section 3.5]{Keller06}. Put
\begin{align*}
\per(A)&=\thick(A_A),\\
\cd_{fd}(A)&=\{X\in\cd(A)\mid \bigoplus_{p\in\mathbb{Z}}H^p(X)\text{ is finite-dimensional}\},\\
\cd^-_{fd}(A)&=\{X\in\cd(A) \mid H^p(X)\text{ is finite-dimensional for all $p\in\mathbb{Z}$}\\ &\qquad \text{and vanishes for }p\gg 0\}.
\end{align*}

\begin{proposition}[{\cite[Proposition 6.12]{AdachiMizunoYang18}}]
\label{prop:ST-pair-for-non-positive-dg-algebra}
Let $A$ be a dg $k$-algebra satisfying
\begin{itemize}
\item[(N)] $H^p(A)=0$ for any $p>0$,
\item[(F)] $H^p(A)$ is finite-dimensional for any $p\in\mathbb{Z}$.
\end{itemize}
Then 
\begin{itemize}
\item[(a)] both $\per(A)$ and $\cd_{fd}(A)$ are Hom-finite and Krull--Schmidt,
\item[(b)] $(\per(A),\cd_{fd}(A),A_A)$ is an ST-triple inside $\cd_{fd}^-(A)$.
\end{itemize}
\end{proposition}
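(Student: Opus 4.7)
The plan is to derive this proposition as an immediate specialization of Proposition~\ref{prop:ST-triple-from-compact-silting}, taking $\ca = \cd(A)$ and $M = A_A$. As noted in the paragraph preceding the statement, $\cd(A)$ has arbitrary coproducts and $A_A$ is a compact generator, so the ambient hypothesis of that proposition holds.

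Next, I would verify the finiteness and vanishing hypotheses on $M = A_A$. The standard identification
\[
\Hom_{\cd(A)}(A_A,\Sigma^p A_A) \cong H^p(A)
\]
shows that condition (F) exactly says $\Hom_{\cd(A)}(M,\Sigma^p M)$ is finite-dimensional for all $p\in\mathbb{Z}$, while condition (N) exactly says this Hom vanishes for all $p>0$. Hence the hypotheses of Proposition~\ref{prop:ST-triple-from-compact-silting} are satisfied.

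The remaining step is to match the three thick subcategories attached to $M$ in Proposition~\ref{prop:ST-triple-from-compact-silting} with $\per(A)$, $\cd_{fd}(A)$, $\cd_{fd}^-(A)$. For an arbitrary $X\in\cd(A)$ one has the more general isomorphism $\Hom_{\cd(A)}(A_A,\Sigma^p X)\cong H^p(X)$. Thus $\ca^c = \thick(A_A) = \per(A)$ by definition, and substituting $H^p(X)$ for $\Hom_{\cd(A)}(M,\Sigma^p X)$ in the defining conditions of $\ca_{fd}$ and $\ca_{fd}^-$ gives exactly the defining conditions of $\cd_{fd}(A)$ and $\cd_{fd}^-(A)$. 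Parts (a) and (b) of the proposition now follow word for word from the corresponding parts of Proposition~\ref{prop:ST-triple-from-compact-silting}.

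The only conceivable obstacle is foundational rather than technical: one must know that $A_A$ is indeed a compact generator of $\cd(A)$ and that compact objects in $\cd(A)$ coincide with $\per(A)$. Both facts are cited from \cite[Section 3.5]{Keller06} and \cite[Theorem 3.4]{Keller06} in the discussion preceding Proposition~\ref{prop:ST-triple-from-compact-silting}, so no new argument is required here.
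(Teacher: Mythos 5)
Your proposal is correct and is essentially identical to the paper's own proof, which likewise deduces the statement from Proposition~\ref{prop:ST-triple-from-compact-silting} via the identification $\Hom_{\cd(A)}(A_A,\Sigma^p X)\cong H^p(X)$. You simply spell out in more detail the matching of $\ca^c$, $\ca_{fd}$, $\ca_{fd}^-$ with $\per(A)$, $\cd_{fd}(A)$, $\cd_{fd}^-(A)$, which the paper leaves implicit.
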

\begin{proof}
This follows from Proposition~\ref{prop:ST-triple-from-compact-silting}, since $H^p(M)=\Hom_{\cd(A)}(A,\Sigma^p M)$ for a dg $A$-module $M$. 
\end{proof}

\smallskip
Let $(\cc,\cd,M)$ be an ST-triple inside $\ct$. Let $\ct_{\geq 0}$ be the smallest strict and full subcategory of $\ct$ which contains $M$ and is closed under extensions, direct summands and negative shifts, and let 
\[
\ct_{\leq 0}=\Sigma^{-1} \{X\in\ct\mid \Hom_\ct(Y,X)=0~\forall~ Y\in\ct_{\geq 0}\}.
\]
We collect some useful results in the following theorem.

\begin{theorem}\label{thm:basic-properties-of-ST-triple} 
Let $(\cc,\cd,M)$ be an ST-triple inside $\ct$.
\begin{itemize}
\item[(a)] {\rm (\cite[Remark 4.4(d)]{AdachiMizunoYang18})} Both $\cc$ and $\cd$ are Hom-finite and Krull--Schmidt.
\item[(b)] {\rm (\cite[Proposition 4.6(c)]{AdachiMizunoYang18})} $\bigcap_{p\in\mathbb{Z}}\ct^{\leq p}=0$.
\item[(c)] {\rm (\cite[Proposition 4.6]{AdachiMizunoYang18})} $(\cd^{\leq 0},\cd^{\geq 0}):=(\ct^{\leq 0}\cap \cd,\ct^{\geq 0})$ is a bounded $t$-structure on $\cd$ with heart $\cd^0=\ct^0$. The object $\sigma^0(M)$ is a projective generator of $\cd^0$, which is equivalent to $\mod \End_\ct(M)$.
\item[(d)] {\rm (\cite[Proposition 4.17]{AdachiMizunoYang18})} $(\ct_{\geq 0},\ct_{\leq 0})$ is a co-$t$-structure on $\ct$ with co-heart $\add(M)$ and $\ct_{\leq 0}=\ct^{\leq 0}$.
\item[(e)] {\rm (\cite[Remark 4.18]{AdachiMizunoYang18})} $(\cc_{\geq 0},\cc_{\leq 0}):=(\ct_{\geq 0},\ct_{\leq 0}\cap \cc)$ is a bounded co-$t$-structure on $\cc$ with co-heart $\add(M)$.
\end{itemize}
\end{theorem}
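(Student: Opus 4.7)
Theorem~\ref{thm:basic-properties-of-ST-triple} is a compendium of results from \cite{AdachiMizunoYang18}, so my plan is to isolate the mechanism behind each statement rather than to reprove everything from scratch. The common inputs are (i) the silting property of $M$ inside $\cc=\thick(M)$, (ii) the $t$-structure (ST2) on $\ct$ governed by $\Hom_\ct(M,-)$, and (iii) the exhaustion condition (ST3). The natural logical order is to establish (c) and (d) first and then deduce (a), (b), (e) from them.

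For (c), I would check that the truncation triangles of the $t$-structure in (ST2) stay inside $\cd$: the Hom-vanishing conditions defining $\cd=\bigcup_p\ct^{\geq p}$ are inherited by the two outer terms of each truncation triangle via the long exact sequence obtained by applying $\Hom_\ct(M,\Sigma^\bullet-)$. Boundedness of the restricted $t$-structure is then immediate from (ST3). The identification $\cd^0\simeq\mod\End_\ct(M)$ is the standard projective-generator argument applied to the functor $\Hom_\cd(\sigma^0 M,-)$, after verifying that $\sigma^0 M$ is projective in $\cd^0$ (using vanishing of positive $\Ext$ from $M$) and a generator (using the definition of $\ct^{\geq 0}$ and (ST3)). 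For (d), one builds $(\ct_{\geq 0},\ct_{\leq 0})$ from the silting object $M$ as indicated in the statement; the key point is the existence of the co-$t$-structure truncation triangles, which is a variant of the silting/co-$t$-structure correspondence of \cite{AiharaIyama12,Bondarko10} carried out in the possibly non-compactly-generated ambient $\ct$ by induction on the smallest $p$ with $X\in\ct^{\leq p}$ (finite by (ST3)), using (ST1) to control the inductive step. The co-heart is identified with $\add(M)$ by invoking the full silting conditions on $M$, and the equality $\ct_{\leq 0}=\ct^{\leq 0}$ is a direct comparison of the defining Hom-vanishings.

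Once (c) and (d) are in hand, (b) follows by taking $X\in\bigcap_p\ct^{\leq p}$: then $\Hom_\ct(M,\Sigma^m X)=0$ for every $m\in\Z$, hence by a dévissage along $\cc=\thick(M)$ also $\Hom_\ct(Y,X)=0$ for every $Y\in\cc$; applying the co-$t$-structure truncation of (d) to $X$ yields $X'\in\ct_{\geq 1}\subseteq\cc$ mapping to $X$, which must vanish, and iterating (or using boundedness on $\cd$-side truncations) forces $X=0$. For (a), Hom-finiteness of $\cc$ is a dévissage of (ST1) along $\cc=\thick(M)$, and Hom-finiteness of $\cd$ reduces through the bounded $t$-structure of (c) to Hom-finiteness of the heart $\mod\End_\ct(M)$, which is clear; Krull--Schmidt is automatic for Hom-finite idempotent-closed additive categories. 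Part (e) is the restriction of (d) to $\cc$ and requires only checking that the truncations of an object in $\cc$ lie in $\cc$, which follows from $\ct_{\geq 0}\subseteq\cc$ by construction. The step I expect to be the main obstacle is the construction of the co-$t$-structure truncations in (d) for objects of $\ct$ that do not lie in $\cc$: producing the $\ct_{\geq 1}$-part there requires an inductive procedure that interlocks (ST1) with the $t$-structure filtration of (ST2), and is where the ST-triple hypotheses really do the work.
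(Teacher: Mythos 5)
The paper does not actually prove this theorem: it is explicitly a compendium imported from \cite{AdachiMizunoYang18}, with each item carrying its own citation (Remark 4.4(d), Propositions 4.6 and 4.17, Remark 4.18), so there is no in-paper argument to compare yours against. Your outline is broadly consistent with how those results are established there; in particular you correctly identify the construction of the co-$t$-structure truncation triangles in (d) as the real content, and the mechanism you describe (iterated minimal right $\add(\Sigma^{-r}M)$-approximations, existence from (ST1), termination in $\ct^{\leq l-1}$ from the silting property) is exactly the $\beta_{\geq l}$-construction the paper records separately as Lemma~\ref{lem:minimal-resolution}.

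Two steps in your sketch would not survive being written out. First, in (a), Hom-finiteness of $\cd$ does not reduce to Hom-finiteness of the heart: d\'evissage along the bounded $t$-structure reduces $\Hom_\cd(X,Y)$ to groups $\Hom_\ct(H,\Sigma^m H')$ with $H,H'$ in the heart and $m>0$, i.e.\ to higher extensions computed in $\ct$, and these are not controlled by the heart being $\mod\End_\ct(M)$. One must return to (ST1): replace $H$ by $\beta_{\geq -m-1}(H)$, which lies in $\cc$ and has finite-dimensional Hom into every object of $\ct$, and check that the error terms vanish because $\beta_{\leq -m-2}(H)\in\ct^{\leq -m-2}$ while $\Sigma^m H'\in\ct^{\geq -m}$. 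Second, your primary argument for (b) does not close: from $\Hom_\ct(Y,X)=0$ for all $Y\in\cc$ you only get that the map $\ct_{\geq 1}\ni X'\to X$ is zero, hence that $X$ is a summand of $X''\in\ct_{\leq 0}$ --- which you already knew --- and ``iterating'' has no visible terminus. Your parenthetical rescue is the right proof and is much shorter than you make it, needing neither (c) nor (d): $X\in\bigcap_p\ct^{\leq p}$ forces $\Hom_\ct(M,\Sigma^m X)=0$ for all $m\in\Z$, so $X\in\ct^{\leq 0}\cap\ct^{\geq 1}$, and axiom (2) of the $t$-structure in (ST2) gives $\Hom_\ct(X,X)=0$, i.e.\ $X=0$.
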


The following result is  \cite[Proposition 4.9]{AdachiMizunoYang18}. By (ST3), there exists $r\in\mathbb{Z}$ such that $Y\in\ct^{\leq r}$.

\begin{lemma}\label{lem:minimal-resolution}
Let $r\geq l$ be integers. For $Y\in\ct^{\leq r}$, there exist $\beta_{\geq l}(Y)\in\ct_{\geq l}$ and $\beta_{\leq l-1}(Y)\in\ct^{\leq l-1}$ and a triangle 
\[
\xymatrix{
\beta_{\geq l}(Y)\ar[r]^(0.6){f_Y} & Y\ar[r]^(0.35){g_Y} & \beta_{\leq l-1}(Y)\ar[r] & \Sigma\beta_{\geq l}(Y)
}
\]
with the following properties:
\begin{itemize}
\item[(a)] $\beta_{\geq l}(Y)\in \Sigma^{-r}M^r\ast\cdots\ast\Sigma^{-l}M^l$ for some $M^r,\ldots,M^l\in\add(M)$;
\item[(b)]  for any simple object $S$ in $\cd^0$ and  for all $p\geq l$ the map
\[
\xymatrix{
\Hom_\ct(Y,\Sigma^{-p} S)\ar[r]^(0.45){f_Y^*} &\Hom(\beta_{\geq l}(Y),\Sigma^{-p} S)
}
\]
is an isomorphism and the two spaces are isomorphic to $\Hom_\ct(M^{p},S)$;
\item[(c)]  for any simple object $S$ in $\cd^0$ and for all $p\leq l-1$ the map
\[
\xymatrix{
\Hom_\ct(\beta_{\leq l-1}(Y),\Sigma^{-p} S)\ar[r]^(0.57){g_Y^*} & \Hom_\ct(Y,\Sigma^{-p} S)
}
\]
is an isomorphism.
\end{itemize}
\end{lemma}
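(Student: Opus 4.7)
The plan is to construct $\beta_{\geq l}(Y)$ iteratively as a minimal silting resolution of the top degrees of $Y$, built layer by layer from $-r$ down to $-l$, and verify (a)--(c) by chasing the resulting long exact sequences. Set $A := \End_\ct(M)$; by (ST1), $A$ is a finite-dimensional $k$-algebra and $\Hom_\ct(M,\Sigma^s Z)$ is a finite-dimensional right $A$-module for every $Z\in\ct$. Moreover, $\Hom_\ct(M,-)$ restricts to an equivalence $\add(M)\simeq \proj A$, so any $A$-linear map from $\Hom_\ct(M,M')$ with $M'\in\add(M)$ lifts uniquely to a morphism in $\ct$.

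\emph{Construction.} Given $Z\in\ct^{\leq s}$, pick a projective cover $\Hom_\ct(M,M^s)\twoheadrightarrow \Hom_\ct(M,\Sigma^s Z)$ of $A$-modules with $M^s\in \add(M)$, lift it to $\Sigma^{-s}M^s\to Z$, and form the triangle $\Sigma^{-s}M^s \to Z \to Z'\to \Sigma^{-s+1}M^s$. The silting vanishing $\Hom_\ct(M,\Sigma^{>0}M)=0$ together with the surjectivity of the cover force $Z'\in \ct^{\leq s-1}$ via the long exact sequence of $\Hom_\ct(M,\Sigma^m-)$. Iterating this construction, starting with $Y^{(0)}=Y$ and working through $s=r, r-1, \ldots, l$, produces $M^s\in\add(M)$ and $Y^{(i)}\in\ct^{\leq r-i}$. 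Set $\beta_{\leq l-1}(Y):=Y^{(r-l+1)}$; iterated applications of the octahedral axiom splice the $r-l+1$ triangles into
\[
\beta_{\geq l}(Y)\xrightarrow{f_Y} Y\xrightarrow{g_Y}\beta_{\leq l-1}(Y)\to\Sigma\beta_{\geq l}(Y)
\]
with $\beta_{\geq l}(Y)\in \Sigma^{-r}M^r\ast\cdots\ast\Sigma^{-l}M^l$; this proves (a). Since $\Sigma^{-i}M^i\in\ct_{\geq i}\subseteq\ct_{\geq l}$ for $l\leq i\leq r$ (using the co-$t$-structure of Theorem~\ref{thm:basic-properties-of-ST-triple}(d)) and $\ct_{\geq l}$ is closed under extensions, $\beta_{\geq l}(Y)\in\ct_{\geq l}$.

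\emph{Properties (b) and (c).} Fix a simple $S\in\cd^0=\ct^0$. The vanishing $\Hom_\ct(M,\Sigma^j S)=0$ for $j\neq 0$ yields $\Hom_\ct(\Sigma^{-i}M^i,\Sigma^{-p}S)=\Hom_\ct(M^i,\Sigma^{i-p}S)=0$ unless $i=p$. Applying $\Hom_\ct(-,\Sigma^{-p}S)$ to each single-step triangle and chasing the LES, almost all layers contribute isomorphisms; only the layer with $i=p+1$ (where $\Sigma^{-i+1}M^i=\Sigma^{-p}M^{p+1}$ pairs with $-p$) is delicate, and there the desired isomorphism reduces to the vanishing of a connecting map $\Hom_\ct(M^{p'},S)\to \Hom_\ct(Y^{(?)},\Sigma^{-p+1}S)$. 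This vanishing exploits the \emph{minimality} of the projective cover: the dual boundary map in the $\Hom_\ct(M,\Sigma^\bullet -)$-LES has image inside $\rad\Hom_\ct(M,M^{p'})$, and any $A$-linear map from $\Hom_\ct(M,M^{p'})$ to the simple $S':=\Hom_\ct(M,S)$ vanishes on the radical. Transferring this back via the identification $\Hom_\ct(N,\Sigma^{-q}S)\cong \Hom_A(\sigma^q(N),S')$ for $N\in\ct^{\leq q}$ (which follows from Lemma~\ref{lem:property-of-truncations-on-morphisms} and the equivalence $\cd^0\simeq \mod A$ of Theorem~\ref{thm:basic-properties-of-ST-triple}(c)) gives the vanishing. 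Iterating the LES then yields $\Hom_\ct(Y,\Sigma^{-p}S)\cong \Hom_\ct(M^p,S)$ for (b) and $\Hom_\ct(\beta_{\leq l-1}(Y),\Sigma^{-p}S)\cong \Hom_\ct(Y,\Sigma^{-p}S)$ for (c); compatibility with $f_Y^*$ and $g_Y^*$ is immediate from the construction.

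\emph{Main obstacle.} The delicate point is converting the minimality of the projective cover in $\mod A$ into the vanishing of composites of morphisms in $\ct$, which is needed to eliminate the connecting maps at the critical layers; every other part of the argument is routine LES chasing combined with octahedral splicing.
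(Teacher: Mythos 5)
Your construction is essentially the paper's: the statement is quoted from \cite[Proposition 4.9]{AdachiMizunoYang18}, and the inductive procedure the paper sketches afterwards is exactly yours, since a projective cover of $\Hom_\ct(M,\Sigma^s Z)$ corresponds precisely to a minimal right $\add(\Sigma^{-s}M)$-approximation of $Z$ under the equivalence $\add(M)\simeq\proj\End_\ct(M)$, with $Z'\in\ct^{\leq s-1}$ forced by the silting vanishing. Your radical argument is the correct mechanism for (b) and (c); the only imprecision is that for (b) with $l\leq p< r$ both layers $s=p$ and $s=p+1$ are delicate (not just one), though the same ``connecting map lands in the radical, hence dies on the simple'' argument disposes of both.
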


The objects $\beta_{\geq l}(Y)$ and $\beta_{\leq l-1}(Y)$ are constructed inductively. The first step goes as follows:  Take a minimal right $\add(\Sigma^{-r}M)$-approximation $f\colon \Sigma^{-r}M^r\to Y$ and form a triangle
\begin{eqnarray}
\xymatrix{
\Sigma^{-1}Y'\ar[r]^h &\Sigma^{-r}M^r\ar[r]^(0.6)f & Y\ar[r]^g & Y'.\label{eq:triangle-minimal-approx}
}
\end{eqnarray} 
Then $Y'\in\ct^{\leq r-1}$ because $M$ is silting. The `minimality' and uniqueness of $\beta_{\geq l}(Y)$ is established by inductively applying the following lemma. This is crucial in the definition of $\uSum$ in Section~\ref{sss:discreteness-wrt-silting}. The `limit' of $\beta_{\geq l}(Y)$ can be considered as a generalisation of minimal projective resolutions. 
Note, however, that in general $\beta_{\geq l}$ and $\beta_{\leq l-1}$ cannot be extended to functors.

\begin{lemma} 
Let $Y\in\ct^{\leq r}$.
Assume that there is a  triangle
\begin{eqnarray}
\xymatrix{
\Sigma^{-1}Y''\ar[r]^{h'} & \Sigma^{-r}N^r\ar[r]^(0.6){f'} & Y\ar[r]^{g'} & Y'',\label{eq:triangle-arbitrary-approx}
}
\end{eqnarray}
with $N^r\in\add(M)$. Then $f'$ is a right $\add(\Sigma^{-r}M)$-approximation if and only if $Y''\in\ct^{\leq r-1}$. If these conditions hold, then $h'$ is the direct sum of $h$ with an isomorphism in $\add(\Sigma^{-r}M)$ and \eqref{eq:triangle-arbitrary-approx} is the direct sum of \eqref{eq:triangle-minimal-approx} with a trivial triangle.
\end{lemma}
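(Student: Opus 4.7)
My plan is to establish the if-and-only-if by applying the functor $\Hom_\ct(\Sigma^{-r}M,-)$ to the triangle \eqref{eq:triangle-arbitrary-approx} and reading off the resulting long exact sequence, and then to derive the splitting statement from the standard argument that a minimal right approximation sits inside any right approximation as a direct summand.

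For the equivalence, I would first observe that $M$ being silting (hence presilting) gives $\Hom_\ct(\Sigma^{-r}M,\Sigma^n(\Sigma^{-r}N^r))=\Hom_\ct(M,\Sigma^n N^r)=0$ for all $n>0$, and the hypothesis $Y\in\ct^{\le r}$ gives $\Hom_\ct(\Sigma^{-r}M,\Sigma^n Y)=\Hom_\ct(M,\Sigma^{n+r}Y)=0$ for all $n>0$. The long exact sequence obtained from \eqref{eq:triangle-arbitrary-approx} then forces $\Hom_\ct(\Sigma^{-r}M,\Sigma^n Y'')=0$ for every $n>0$, so the condition $Y''\in\ct^{\le r-1}$ reduces to the single vanishing $\Hom_\ct(\Sigma^{-r}M,Y'')=0$. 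In degree zero the exact sequence reads
\[
\Hom_\ct(\Sigma^{-r}M,\Sigma^{-r}N^r)\xrightarrow{f'_*}\Hom_\ct(\Sigma^{-r}M,Y)\to\Hom_\ct(\Sigma^{-r}M,Y'')\to 0,
\]
so that this vanishing is precisely surjectivity of $f'_*$, which by definition is the statement that $f'$ is a right $\add(\Sigma^{-r}M)$-approximation.

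For the splitting statement, assume both equivalent conditions hold. Since $f$ and $f'$ are both right $\add(\Sigma^{-r}M)$-approximations, there exist $\phi\colon\Sigma^{-r}N^r\to\Sigma^{-r}M^r$ and $\psi\colon\Sigma^{-r}M^r\to\Sigma^{-r}N^r$ with $f\phi=f'$ and $f'\psi=f$, whence $f\phi\psi=f$. Minimality of $f$ forces $\phi\psi$ to be an automorphism of $\Sigma^{-r}M^r$, so $\psi$ is a split monomorphism in the idempotent-complete additive category $\add(\Sigma^{-r}M)$; this yields a decomposition $\Sigma^{-r}N^r\cong\Sigma^{-r}M^r\oplus\Sigma^{-r}K^r$ for some $K^r\in\add(M)$, under which $f'$ restricts to $f$ on the first summand and to $0$ on the second. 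To conclude, any triangle whose middle morphism has the form $(f,0)\colon A\oplus K\to B$ is canonically the direct sum of the triangle built from $f$ with the contractible triangle $K\xrightarrow{\id}K\to 0\to\Sigma K$; applying this to \eqref{eq:triangle-arbitrary-approx} and comparing with \eqref{eq:triangle-minimal-approx} delivers $\Sigma^{-1}Y''\cong\Sigma^{-1}Y'\oplus\Sigma^{-r}K^r$, $Y''\cong Y'\oplus\Sigma^{-r+1}K^r$, and exhibits $h'$ as $h$ direct-summed with the identity of $\Sigma^{-r}K^r$, which is an isomorphism in $\add(\Sigma^{-r}M)$.

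The only delicate point I foresee is the minimality step, which requires that $\add(\Sigma^{-r}M)$ be Krull--Schmidt so that the split monomorphism $\psi$ produces a genuine summand decomposition; this is guaranteed by Theorem~\ref{thm:basic-properties-of-ST-triple}(a). The remaining manipulations are routine identifications of cones of zero maps and bookkeeping for direct sums of triangles.
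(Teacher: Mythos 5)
Your proposal is correct and follows essentially the same route as the paper: the equivalence is read off from the long exact sequence obtained by applying $\Hom_\ct(M,?)$ (equivalently $\Hom_\ct(\Sigma^{-r}M,?)$) to the triangle, and the splitting comes from the standard fact that the minimal right approximation $f$ splits off from any right approximation $f'$, so that $f'=(f,0)$ and the triangle decomposes accordingly. You merely supply the details (presilting vanishing, the reduction to a single Hom-vanishing, and the idempotent-splitting via Krull--Schmidtness) that the paper leaves to "inspection''.
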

\begin{proof}
By inspection on the long exact sequence obtained by applying $\Hom_\ct(M,?)$ to \eqref{eq:triangle-arbitrary-approx} we obtain the first statement. If the conditions hold, then $f'$ is the direct sum of $f$ with a morphism $\Sigma^{-r}L^r\to 0$ with $L^r\in\add(M)$. The second statement follows.
\end{proof}

Repeatedly applying Lemma~\ref{lem:minimal-resolution}, we obtain the following corollary. 
\begin{corollary}\label{cor:minimal-resolution}
Let $Y\in\ct^{\leq r}$. Assume $N^r,\ldots,N^{l}\in\add(M)$ and $Y''\in\ct^{\leq l-1}$ with $Y\in\Sigma^{-r}N^r\ast\cdots\ast\Sigma^{-l}N^{l}\ast Y''$. Then $M^r,\ldots,M^{l}$ and $\beta_{\leq l-1}(Y)$ in Lemma~\ref{lem:minimal-resolution} are direct summands of $N^r,\ldots,N^{l}$ and $Y''\in\ct^{\leq l-1}$, respectively.
\end{corollary}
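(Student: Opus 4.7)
The plan is to induct on the length $n = r - l + 1$ of the given $\ast$-filtration of $Y$, repeatedly applying the preceding Lemma to peel off the outermost piece and invoking the inductive hypothesis on the remainder. The base case $n = 1$ reads $Y \in \Sigma^{-r}N^r \ast Y''$ with $Y'' \in \ct^{\leq r-1}$, which is exactly the setting of the preceding Lemma: it yields $N^r = M^r \oplus L^r$ with $L^r \in \add(M)$, and realising the arbitrary triangle as the direct sum of the minimal triangle \eqref{eq:triangle-minimal-approx} with a trivial triangle (necessarily of the form $\Sigma^{-r}L^r \xrightarrow{\sim} \Sigma^{-r}L^r \to 0 \to \Sigma^{-r+1}L^r$, since the third term must match $Y$) shows $Y'' = \beta_{\leq r-1}(Y) \oplus \Sigma^{-r+1}L^r$. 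Both direct-summand claims of the corollary follow immediately.

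For the inductive step, write $Y \in \Sigma^{-r}N^r \ast Y_1$ where $Y_1 \in \Sigma^{-r+1}N^{r-1} \ast \cdots \ast \Sigma^{-l}N^l \ast Y''$. The first thing to check is that $Y_1 \in \ct^{\leq r-1}$: each $\Sigma^{-p}N^p$ with $p \leq r-1$ lies in $\ct^{\leq p} \subseteq \ct^{\leq r-1}$ since $N^p \in \add(M) \subseteq \ct^{\leq 0}$, we also have $Y'' \in \ct^{\leq l-1} \subseteq \ct^{\leq r-1}$, and this subcategory is closed under extensions. The preceding Lemma then gives $N^r = M^r \oplus L^r$ and $Y_1 = \beta_{\leq r-1}(Y) \oplus \Sigma^{-r+1}L^r$, establishing the case $p = r$ of the direct-summand claim.

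To invoke the inductive hypothesis on the filtration of $Y_1$, which has length $n - 1$, and translate the result back to $Y$, the key point is that minimal right approximations are additive with respect to direct sums in a Krull--Schmidt category (guaranteed here by Theorem~\ref{thm:basic-properties-of-ST-triple}(a)), and $\Sigma^{-r+1}L^r$ already lies in $\add(\Sigma^{-r+1}M)$. Hence the first step of the minimal resolution of $Y_1$ absorbs $\Sigma^{-r+1}L^r$ as an identity summand, giving the term at level $r-1$ for $Y_1$ equal to $M^{r-1} \oplus L^r$, and $\beta_{\leq r-2}(Y_1) = \beta_{\leq r-2}(Y)$. Iterating, the minimal resolutions of $Y$ and $Y_1$ coincide from level $r-2$ downward; in particular $\beta_{\leq l-1}(Y_1) = \beta_{\leq l-1}(Y)$. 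Combining with the inductive hypothesis applied to $Y_1$ then yields $M^p \mid N^p$ for $p = r-1, \ldots, l$ and $\beta_{\leq l-1}(Y) \mid Y''$, completing the induction.

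The main obstacle is precisely the bookkeeping of the last step: verifying that the residual $\Sigma^{-r+1}L^r$ produced at the outer peeling is harmlessly absorbed into the next layer of the minimal resolution, so that the resolutions of $Y$ and $Y_1$ genuinely agree below level $r-1$ and differ only by the extra summand $L^r$ at level $r-1$. No deeper difficulty appears.
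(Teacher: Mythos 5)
Your proof is correct and is precisely the induction the paper intends by its one-line justification (``repeatedly applying'' the preceding unlabelled approximation lemma): peel off the top layer, then absorb the residual summand $\Sigma^{-r+1}L^r$ into the next minimal approximation so that the resolutions of $Y$ and $Y_1$ agree below level $r-1$. The one ingredient you use beyond the paper's lemmas --- that minimal right $\add(\Sigma^{-p}M)$-approximations are additive over direct sums in a Hom-finite Krull--Schmidt category (available here by Theorem~\ref{thm:basic-properties-of-ST-triple}(a)) --- is standard and is genuinely needed to get the summand relation in the right direction, so it is good that you flagged it.
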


\subsection{Discretenesses}

Let $\ct$ be a triangulated $k$-category and $(\cc,\cd,M)$ an ST-triple inside $\ct$. We recall two notions of discreteness introduced in \cite{BroomheadPauksztelloPloog18}.

Recall that $M$ is a silting object of $\cc$ and on $\cd$ there is a bounded $t$-structure $(\cd^{\leq 0},\cd^{\geq 0})$ with heart $\cd^0$.

\subsubsection{Discreteness with respect to the $t$-structure}
\label{sss:discreteness-wrt-t-str}

For $X\in\cd$, define 
\[
\uDim(X)=(\udim \sigma^{p}(X))_{p\in \mathds{Z}}\in (K_{0}(\cd^0)^+)^{\oplus\mathds{Z}}.
\]

\begin{lemma}\label{lem:triangle-and-dimension-vector}
Let $X'\to X\to X''\to \Sigma X'$ be a triangle in $\ct$. Then
\[
\uDim(\sigma^{\geq p}(X))\leq \uDim(\sigma^{\geq p}(X'))+\uDim(\sigma^{\geq p}(X''))
\]
for any $p\in\mathbb{Z}$.
\end{lemma}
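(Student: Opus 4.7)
The plan is to reduce the claim to a pointwise inequality in $K_0(\cd^0)^+$. Since the order on $(K_0(\cd^0)^+)^{\oplus\mathbb{Z}}$ is the componentwise one, it suffices to compare the two sides coordinate by coordinate. First I would observe, using that truncations of a $t$-structure satisfy $\sigma^{\geq q}\sigma^{\geq p}=\sigma^{\geq\max(p,q)}$, that the composition $\sigma^q\circ\sigma^{\geq p}$ agrees with $\sigma^q$ when $q\geq p$ and vanishes when $q<p$. Hence $\uDim(\sigma^{\geq p}(X))$ has $q$-th component $\udim\sigma^q(X)$ for $q\geq p$ and $0$ otherwise, and analogously for $X'$ and $X''$. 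So it is enough to establish, for every integer $q$,
\[
\udim\sigma^q(X)\leq \udim\sigma^q(X')+\udim\sigma^q(X'')
\]
in $K_0(\cd^0)^+$.

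Next, I would apply the cohomological family $\{\sigma^q\}_{q\in\mathbb{Z}}$ to the triangle $X'\to X\to X''\to\Sigma X'$ to obtain the long exact sequence
\[
\cdots\to\sigma^{q-1}(X'')\to\sigma^q(X')\xrightarrow{a_q}\sigma^q(X)\xrightarrow{b_q}\sigma^q(X'')\xrightarrow{c_q}\sigma^{q+1}(X')\to\cdots
\]
in the abelian heart $\cd^0$. Splitting at position $q$ yields three short exact sequences
\[
0\to\ker(a_q)\to\sigma^q(X')\to\im(a_q)\to 0,
\]
\[
0\to\im(a_q)\to\sigma^q(X)\to\im(b_q)\to 0,
\]
\[
0\to\im(b_q)\to\sigma^q(X'')\to\ker(a_{q+1})\to 0,
\]
using that $\im(a_q)=\ker(b_q)$, $\im(b_q)=\ker(c_q)$, and $\im(c_q)=\ker(a_{q+1})$. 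Taking classes in $K_0(\cd^0)$ and combining gives
\[
\udim\sigma^q(X')+\udim\sigma^q(X'')-\udim\sigma^q(X)=\udim\ker(a_q)+\udim\ker(a_{q+1}),
\]
and the right-hand side lies in $K_0(\cd^0)^+$ because dimension vectors of objects of $\cd^0$ have non-negative coordinates in the basis of classes of simple objects.

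I do not expect any real obstacle here; the argument is routine homological bookkeeping. The only mildly subtle point is verifying the compatibility $\sigma^q\circ\sigma^{\geq p}=\sigma^q$ for $q\geq p$, which legitimises reducing the original tuple-wise inequality to pointwise ones. Beyond the existence of the bounded $t$-structure on $\cd$ with its cohomology functors (guaranteed by Theorem~\ref{thm:basic-properties-of-ST-triple}(c)) and the abelian structure of its heart $\cd^0$, no particular feature of the ST-triple $(\cc,\cd,M)$ enters the proof.
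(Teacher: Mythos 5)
Your argument is correct and follows essentially the same route as the paper, whose entire proof is the one-line remark that $\{\sigma^p \mid p\in\mathbb{Z}\}$ is cohomological. The reduction via $\sigma^q\circ\sigma^{\geq p}=\sigma^q$ for $q\geq p$ (and $=0$ for $q<p$), together with the long-exact-sequence bookkeeping in $K_0(\cd^0)$, is exactly the detail the paper leaves implicit.
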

\begin{proof}
This is because $\{\sigma^p|p\in\mathbb{Z}\}$ is cohomological.
\end{proof}

For $x=(x^p)_{p\in\mathbb{Z}},y=(y^p)_{p\in\mathbb{Z}}\in K_0(\cd^0)^{\oplus \mathbb{Z}}$ define $y\leqslant x$ if $x^p-y^p\in K_0(\cd^0)^+$ for all $p\in\mathbb{Z}$. 
For $x\in K_{0}(\cd^0)^{\oplus\mathds{Z}}$, let $\Ind^x\cd$ (respectively, $\Ind^{\leqslant x}\cd$) be the isoclasses of indecomposable objects $X$ of $\cd$ with $\uDim(X)=x$ (respectively, $\uDim(X)\leqslant x$). 

\begin{definition}[{\cite[Definition 2.1]{BroomheadPauksztelloPloog18}}]
\label{defn:discreteness-wrt-t-structure}
The category $\cd$ is called \emph{$\cd^0$-discrete}  if the set $\Ind^x\cd$ is finite for any $x\in K_{0}(\cd^0)^{\oplus\mathds{Z}}$.
\end{definition}

\begin{example}
Let $A$ be a finite-dimensional $k$-algebra.
For $X\in\cd^b(\mod A)$, define $\uDim(X)=(\udim H^{i}(X))_{i\in \mathds{Z}}$, which belongs to the cone $(K_{0}(\mbox{mod}A)^+)^{\oplus\mathds{Z}}$. 
The algebra $A$ is called \emph{derived-discrete} \cite{Vossieck01} if the number of isoclasses of indecomposable objects $X$ of $\cd^b(\mod A)$ with $\uDim(X)=x$ is finite for any $x\in K_{0}(\mbox{mod}A)^{\oplus\mathds{Z}}$. It is clear that this is exactly the $(\mod A)$-discretenss in the sense of Definition~\ref{defn:discreteness-wrt-t-structure}.
There is a classification of derived-discrete algebras in \cite{Vossieck01} and a description of the AR quivers of $K^b(\proj)$ and $\cd^b(\mod)$ in \cite{BobinskiGeissSkowronski04} (see also \cite{KalckYang18}).
\end{example}

\begin{example}
Let $A=k[t]$ with $\mathrm{deg}(t)=-1$. We consider it as a dg algebra with trivial differential. Then $A$ satisfies the conditions (N) and (F) in Proposition~\ref{prop:ST-pair-for-non-positive-dg-algebra}. Take the ST-triple $(\cc,\cd,M)=(\per(A),\cd_{fd}(A),A_A)$ inside $\cd^-_{fd}(A)$. Then $\cd^0$ is the semisimple abelian category with a unique simple object $S=A/(t)$ (up to isomorphism).

According to \cite[Theorem 4.1(ii)]{KellerYangZhou09} (see also \cite[Lemma 8.8]{Joergensen04}), all indecomposable objects of $\cd_{fd}(A)$ are of the form $\Sigma^m A/(t^l)$ ($m\in\mathbb{Z}$ and $l\in\mathbb{N}$).  Put $x(m,l)=\sum_{p=m}^{m+l-1}\uDim(\Sigma^p S)$. Then $\uDim(\Sigma^m A/(t^l))=x(m,l)$. Therefore we have
\[
\# \Ind^x\cd=\begin{cases} 1 & \text{if } x=x(m,l) \text{ for some } m\in\mathbb{Z} \text{ and } l\in\mathbb{N},\\
0 & \text{otherwise}.
\end{cases}
\]
As a consequence, $\cd$ is $\cd^0$-discrete.
\end{example}

\begin{lemma}\label{lem:equivalent-condition-for-D-discreteness}
The category $\cd$ is $\cd^0$-discrete if and only if  the set $\Ind^{\leqslant x}\cd$ is finite for any $x\in K_{0}(\cd^0)^{\oplus \mathds{Z}}$.
\end{lemma}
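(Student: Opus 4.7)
The plan is to prove the two directions separately. One implication is immediate: the inclusion $\Ind^x\cd\subseteq\Ind^{\leqslant x}\cd$ shows that if every $\Ind^{\leqslant x}\cd$ is finite, then every $\Ind^x\cd$ is finite.

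For the converse, assume that $\Ind^y\cd$ is finite for every $y\in K_0(\cd^0)^{\oplus\mathbb{Z}}$, and fix $x\in K_0(\cd^0)^{\oplus\mathbb{Z}}$. The key step is to observe that
\[
\Ind^{\leqslant x}\cd \;=\; \bigcup_{y} \Ind^{y}\cd,
\]
where $y$ ranges over those elements of $(K_0(\cd^0)^+)^{\oplus\mathbb{Z}}$ with $y\leqslant x$, because $\uDim(X)$ lies in $(K_0(\cd^0)^+)^{\oplus\mathbb{Z}}$ for every $X\in\cd$. The proof is therefore reduced to a combinatorial fact: the indexing set $\{y\in (K_0(\cd^0)^+)^{\oplus\mathbb{Z}}\mid y\leqslant x\}$ is finite.

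To verify this finiteness, I would use that $K_0(\cd^0)$ is a free abelian group with basis $[S_1],\dots,[S_n]$ the classes of the simple objects of $\cd^0$, and that $K_0(\cd^0)^+$ consists precisely of the non-negative integer combinations of this basis (as recalled in Subsection~\ref{ss:grothendieck-groups}). Writing $x=(x^p)_{p\in\mathbb{Z}}$ with $x^p=\sum_{i=1}^n x^p_i[S_i]$ and $y=(y^p)_{p\in\mathbb{Z}}$ with $y^p=\sum_{i=1}^n y^p_i[S_i]$, the conditions $y^p\in K_0(\cd^0)^+$ and $x^p-y^p\in K_0(\cd^0)^+$ translate to $0\leqslant y^p_i\leqslant x^p_i$ for every $p$ and $i$. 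In particular, if some $x^p_i$ is negative, then no such $y$ exists, so the set is empty; otherwise, the number of choices for each $y^p_i$ is at most $x^p_i+1$. Since $x$ belongs to the direct sum, only finitely many $p$ satisfy $x^p\neq 0$, so only finitely many components $y^p$ can be nonzero. Hence the indexing set is finite.

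Combining these observations, $\Ind^{\leqslant x}\cd$ is a finite union of sets $\Ind^{y}\cd$, each finite by hypothesis, so it is itself finite. There is no essential obstacle here; the argument is a straightforward bookkeeping exercise using that $K_0(\cd^0)^+$ is the non-negative cone in a lattice of finite rank and that $(K_0(\cd^0))^{\oplus\mathbb{Z}}$ is a direct sum rather than a product.
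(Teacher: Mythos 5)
Your proposal is correct and follows exactly the paper's argument: the same decomposition $\Ind^{\leqslant x}\cd=\bigcup_{0\leqslant y\leqslant x}\Ind^{y}\cd$ together with the finiteness of the set of $y$ with $0\leqslant y\leqslant x$, which you merely verify in more explicit coordinate detail than the paper does.
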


\begin{proof} The ``if" part is obvious. The ``only if" part follows from the equality
\[
\Ind^{\leqslant x}\cd=\bigcup_{0\leqslant y\leqslant x} \Ind^{y}\cd
\]
and the fact that the number of $y$ satisfying $0\leqslant y\leqslant x$ is finite.
\end{proof}

\subsubsection{Discreteness with respect to the silting object}\label{sss:discreteness-wrt-silting}

Take $Y\in\cc$. Then there exists $l\in\mathbb{Z}$ such that $\beta_{\leq l-1}(Y)=0$. Take $M^r,\ldots,M^{l}\in\add(M)$ as in Lemma~\ref{lem:minimal-resolution} and put $M^p=0$ if $p>r$ or $p<l$. Define 
\[\uSum(Y)=(\usum(M^p))_{p\in\mathbb{Z}}\in (K^{\sp}_0(\add(M))^+)^{\oplus \mathbb{Z}},\]
where $\usum$ is defined in Section~\ref{ss:grothendieck-groups}.

\begin{lemma}\label{lem:triangles-and-sums}
Let $Y'\to Y\to Y''\to \Sigma Y'$ be a triangle in $\ct$. Then for any $l\in\mathbb{Z}$
\[
\uSum(\beta_{\geq l}(Y))\leq \uSum(\beta_{\geq l}(Y'))+\uSum(\beta_{\geq l}(Y'')).
\]
\end{lemma}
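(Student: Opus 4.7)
My plan is to obtain the inequality at each level $p$ directly from the long exact sequence of the given triangle, after using Lemma~\ref{lem:minimal-resolution}(b) to re-express the invariant $\uSum$ in terms of Hom-dimensions into the simples of the heart $\cd^{0}$. First, by (ST3), choose an integer $r\geq l$ with $Y,Y',Y''\in\ct^{\leq r}$, and let $M^{p},M'^{p},M''^{p}\in\add(M)$ be the level-$p$ terms of the minimal resolutions of $Y$, $Y'$ and $Y''$ provided by Lemma~\ref{lem:minimal-resolution} (taken to be zero for $p>r$ or $p<l$).

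By Theorem~\ref{thm:basic-properties-of-ST-triple}(c), the heart $\cd^{0}$ is equivalent to $\mod\End_\ct(M)$ with $\sigma^{0}(M)$ a projective generator, and the isoclasses of simples $S_{j}$ of $\cd^{0}$ are in bijection, under the projective cover, with the isoclasses $[M_{j}]$ of the indecomposable summands of $M$; in particular, $\Hom_\ct(M_{i},S_{j})$ vanishes for $i\neq j$ and is nonzero for $i=j$. Consequently, for any $N\in\add(M)$, the tuple $(\dim_{k}\Hom_\ct(N,S_{j}))_{j}$ is a monotone encoding of the coefficient vector $\usum(N)\in K^{\sp}_{0}(\add M)^{+}$: the $j$-th Hom-dimension is a fixed positive multiple of the coefficient of $[M_{j}]$. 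Therefore, to prove the target coordinatewise inequality it is enough to show, for every $j$ and every $p\in\Z$,
\[
\dim_{k}\Hom_\ct(M^{p},S_{j})\leq\dim_{k}\Hom_\ct(M'^{p},S_{j})+\dim_{k}\Hom_\ct(M''^{p},S_{j}).
\]

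This inequality is immediate from the long exact sequence. Applying the contravariant functor $\Hom_\ct(-,\Sigma^{-p}S_{j})$ to the triangle $Y'\to Y\to Y''\to\Sigma Y'$ and using exactness at the middle term yields
\[
\dim_{k}\Hom_\ct(Y,\Sigma^{-p}S_{j})\leq\dim_{k}\Hom_\ct(Y',\Sigma^{-p}S_{j})+\dim_{k}\Hom_\ct(Y'',\Sigma^{-p}S_{j}),
\]
and for $p\geq l$, Lemma~\ref{lem:minimal-resolution}(b) identifies each of these three dimensions with $\dim_{k}\Hom_\ct(M^{p},S_{j})$, $\dim_{k}\Hom_\ct(M'^{p},S_{j})$ and $\dim_{k}\Hom_\ct(M''^{p},S_{j})$ respectively; for $p<l$ all three components are zero, and the inequality is trivial.

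The main obstacle, as I see it, is conceptual rather than technical: the inductive definition of $\uSum$ via iterated minimal right $\add(\Sigma^{-p}M)$-approximations suggests that one should chase the given triangle through the octahedral axiom and match the resolutions of $Y,Y',Y''$ layer by layer, an approach that would require substantial combinatorial bookkeeping (swapping ``silting'' layers past each other using $\Hom(M,\Sigma^{>0}M)=0$, and swapping residual $\ct^{\leq l-1}$-pieces past them using the co-$t$-structure orthogonality from Theorem~\ref{thm:basic-properties-of-ST-triple}(d), and finally invoking Corollary~\ref{cor:minimal-resolution}). The shortcut is to notice that Lemma~\ref{lem:minimal-resolution}(b) already records $\uSum$ as a tuple of Hom-dimensions into the simples of the heart, after which the desired inequality is forced by the standard long exact sequence and no filtration argument is needed.
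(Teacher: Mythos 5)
Your argument is correct, and it takes a genuinely different route from the paper's. The paper works entirely inside the split Grothendieck group: it substitutes the filtrations $Y'\in\Sigma^{-r}L^r\ast\cdots\ast\Sigma^{-l}L^l\ast\beta_{\leq l-1}(Y')$ and $Y''\in\Sigma^{-r}N^r\ast\cdots\ast\Sigma^{-l}N^l\ast\beta_{\leq l-1}(Y'')$ into $Y\in Y'\ast Y''$, shuffles the layers into standard order using that the relevant $\ast$-products split, and then invokes Corollary~\ref{cor:minimal-resolution} to conclude that each $M^p$ is a direct summand of $L^p\oplus N^p$ --- precisely the filtration bookkeeping you chose to sidestep. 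Your shortcut instead linearises the problem: Lemma~\ref{lem:minimal-resolution}(b) identifies $\dim_k\Hom_\ct(M^p,S_j)$ with $\dim_k\Hom_\ct(Y,\Sigma^{-p}S_j)$, exactness of the long exact sequence at the middle term gives subadditivity of the latter over the triangle, and the fact that $\dim_k\Hom_\ct(M_i,S_j)$ is a positive multiple of $\delta_{ij}$ (which does follow from Theorem~\ref{thm:basic-properties-of-ST-triple}(c), the $\sigma^0(M_i)$ being the indecomposable projectives of $\cd^0$ and the $S_j$ their tops, with full faithfulness of $\sigma^{\geq 0}$ on $\ct_{\geq 0}$ guaranteeing that distinct $M_i$ give distinct projectives) translates these numerical inequalities back into the coordinatewise order on $(K_0^{\sp}(\add M)^+)^{\oplus\mathbb{Z}}$. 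That identification of the two partial orders is the one ingredient you use beyond the paper's toolkit; it is correct, and it buys a shorter proof needing only part (b) of Lemma~\ref{lem:minimal-resolution} rather than the summand statement of Corollary~\ref{cor:minimal-resolution}. The trade-off is that the paper's argument establishes something slightly stronger --- an actual direct-summand relation rather than just an inequality of classes --- which is closer in spirit to how the objects $\beta_{\geq l}(Y)$ are manipulated elsewhere in the paper.
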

\begin{proof}
According to Lemma~\ref{lem:minimal-resolution} there exist $L^r,\ldots,L^l$ and $N^r,\ldots,N^l$ such that $Y'\in \Sigma^{-r}L^r\ast\cdots\ast\Sigma^{-l}L^l\ast\beta_{\leq l-1}(Y')$ and $Y''\in \Sigma^{-r}N^r\ast\cdots\ast\Sigma^{-l}N^l\ast\beta_{\leq l-1}(Y'')$. It follows by induction that 
\[
Y\in Y'\ast Y''\subseteq  \Sigma^{-r}(L^r\oplus N^r)\ast\cdots\ast\Sigma^{-l}(L^l\oplus N^l)\ast(\beta_{\leq l-1}(Y')*\beta_{\leq l-1}(Y'')),
\]
because $\beta_{\leq l-1}(Y')\ast\Sigma^{-p}N^p=\beta_{\leq l-1}(Y')\oplus\Sigma^{-p}N^p\subseteq \Sigma^{-p}N^p\ast \beta_{\leq l-1}(Y')$  and $L^p\ast N^p=L^p\oplus N^p$ for $l\leq p\leq r$. By Corollary~\ref{cor:minimal-resolution}, we obtain the desired result.
\end{proof}

For $u=(u^{p})_{p\in \mathds{Z}}$, $v=(v^{p})_{p\in \mathds{Z}}\in K_{0}^{\sp}(\add(M))^{\oplus\mathds{Z}}$, define $v\leqslant u$ if $u^{p}- v^{p}\in {K_0^{\sp}(\add(M))}^+$ for all $p\in \mathds{Z}$.
For $u\in K_0^{\sp}(\add(M))^{\oplus\mathbb{Z}}$, let $\Ind_u \cc$ (respectively, $\Ind_{\leqslant u} \cc$) be the set of isoclasses of indecomposable objects $Y$ of $\cc$ with $\uSum(Y)=u$ (respectively, $\uSum(Y)\leqslant u$).

\begin{definition}\label{defn:discreteness-with-respect-to-silting}
The category $\cc$ is called \emph{$M$-discrete} if the set $\Ind_u \cc$ is finite for any $u\in {K^{\sp}_{0}(\add(M))}^{\oplus \mathds{Z}}$.
\end{definition}

\begin{example}
Let $A=k[t]$ with $\mathrm{deg}(t)=-1$ and consider it as a dg algebra with trivial differential. Take the ST-triple $(\per(A),\cd_{fd}(A),A_A)$ inside $\cd^-_{fd}(A)$.

According to \cite[Theorem 4.1(i)]{KellerYangZhou09}, all indecomposable objects of $\per(A)$ are of the form $\Sigma^m A/(t^l)$ ($m\in\mathbb{Z}$ and $l\in\mathbb{N}$) or $\Sigma^m A_A$ ($m\in\mathbb{Z}$).  Put $u(m)=\uSum(\Sigma^m A_A)$ and put $u(m,l)=u(m)+u(m+l)$. Then $\uSum(\Sigma^m A/(t^l))=u(m,l)$. Therefore we have
\[
\# \Ind_u\per(A)=\begin{cases} 1 & \text{if } u=u(m,l) \text{ for some } m\in\mathbb{Z} \text{ and } l\in\mathbb{N},\\
1 & \text{if } u=u(m) \text{ for some } m\in\mathbb{Z},\\
0 & \text{otherwise}.
\end{cases}
\]
As a consequence, $\per(A)$ is $A_A$-discrete.
\end{example}

The following result is dual to Lemma~\ref{lem:equivalent-condition-for-D-discreteness} and its proof is similar. 

\begin{lemma}\label{lem:equivalent-condition-for-M-discreteness}
The category $\cc$ is $M$-discrete if and only if the set $\Ind_{\leqslant u} \cc$ is finite for any $u\in {K^{sp}_{0}(\add(M))}^{\oplus \mathds{Z}}$.
\end{lemma}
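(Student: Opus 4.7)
My plan is to dualize the proof of Lemma~\ref{lem:equivalent-condition-for-D-discreteness} verbatim, with $\uSum$ and $\Ind_u\cc$ replacing $\uDim$ and $\Ind^x\cd$. The ``if'' direction is immediate from the inclusion $\Ind_u\cc\subseteq\Ind_{\leqslant u}\cc$. For the ``only if'' direction, I would observe the elementary set-theoretic identity
\[
\Ind_{\leqslant u}\cc=\bigcup_{0\leqslant v\leqslant u}\Ind_v\cc,
\]
which follows at once from unpacking the definition of $\uSum$, and then combine the hypothesis that each $\Ind_v\cc$ is finite with the finiteness of the index set $\{v\in K_0^{\sp}(\add(M))^{\oplus\mathbb{Z}}\mid 0\leqslant v\leqslant u\}$.

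The only point worth a line of justification is this last finiteness, which will be the routine centerpiece of the proof. Since $\cc$ is Hom-finite and Krull--Schmidt by Theorem~\ref{thm:basic-properties-of-ST-triple}(a), the silting object $M$ decomposes into finitely many indecomposable summands up to isomorphism, so $K_0^{\sp}(\add(M))$ is free abelian of finite rank on the classes of these summands, and the positive cone consists of those tuples with non-negative integer coefficients in this basis (see Section~\ref{ss:grothendieck-groups}). The inequalities $0\leqslant v^p$ and $v^p\leqslant u^p$ then confine each coordinate of $v^p$ to a finite range, and only the finitely many $p$ lying in the support of $u$ contribute since $0\leqslant v^p\leqslant 0$ forces $v^p=0$. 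I do not anticipate any real obstacle; the argument is a formal dual of the one given for Lemma~\ref{lem:equivalent-condition-for-D-discreteness}, and the only minor care is to invoke Krull--Schmidt to ensure that $K_0^{\sp}(\add(M))$ has finite rank.
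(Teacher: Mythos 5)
Your proposal is correct and is essentially the paper's own argument: the paper simply states that the lemma is dual to Lemma~\ref{lem:equivalent-condition-for-D-discreteness} with a similar proof, which is exactly the dualization you carry out. Your extra justification of the finiteness of $\{v \mid 0\leqslant v\leqslant u\}$ via the finite rank of $K_0^{\sp}(\add(M))$ is sound and merely makes explicit what the paper leaves implicit.
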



\begin{remark}
Using Lemma~\ref{lem:equivalent-condition-for-M-discreteness} one can show that $\cc$ is $M$-discrete if and only if it is discrete with respect to the co-$t$-structure $(\cc_{\geq 0},\cc_{\leq 0})$ in the sense of \cite[Definition 4.1]{BroomheadPauksztelloPloog18}. 
\end{remark}

\section{The two discretenesses are equivalent}\label{s:equivalence}

Let $\ct$ be a triangulated $k$-category and $(\cc,\cd,M)$ an ST-triple inside $\ct$. In Section~\ref{s:ST-triple-and-discreteness} we recalled two notions of discreteness in \cite{BroomheadPauksztelloPloog18}, one for $\cc$ and one for $\cd$. The following main result of this paper states that these two notions are equivalent. This has the flavour of Koszul duality.

\begin{theorem}\label{thm:two-discretenesses}
The category $\cc$ is $M$-discrete if and only if the category $\cd$ is $\cd^0$-discrete.
\end{theorem}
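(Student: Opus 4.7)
The plan is to interpret both $\uSum(Y)$ for $Y\in\cc$ and $\uDim(X)$ for $X\in\cd$ through Hom-vectors to the shifted simples of $\cd^{0}$, then to establish two reciprocal boundedness estimates, and finally to transfer discreteness via the triangle of Lemma~\ref{lem:minimal-resolution}.  Write $M=\bigoplus_{j=1}^{n}M_j$ and let $S_1,\ldots,S_n$ be the simples of $\cd^{0}\simeq\mod\End_\ct(M)$ with $\sigma^{0}(M_j)$ the projective cover of $S_j$.  Lemma~\ref{lem:minimal-resolution}(b) gives the identification
\[
\uSum(Y)^{p}_{j}\;=\;\dim_k \Hom_\ct(Y,\Sigma^{-p}S_j)
\]
for $Y\in\cc$ on the support interval of $\uSum(Y)$.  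Moreover, if $\uSum(Y)^{p}=0$ for all $p<l$, then Lemma~\ref{lem:minimal-resolution}(c) combined with an induction on cohomological degree using Theorem~\ref{thm:basic-properties-of-ST-triple}(b) ($\bigcap_p \ct^{\le p}=0$) forces $\beta_{\le l-1}(Y)=0$, so that $Y=\beta_{\ge l}(Y)\in \Sigma^{-r}M^{r}\ast\cdots\ast\Sigma^{-l}M^{l}$ with each $M^{p}\in\add(M)$ determined by $\uSum(Y)^{p}$ up to isomorphism.

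The two key estimates are as follows.  For $X\in\cd$ with $\uDim(X)\le x$ supported in $[s,r]$, $X$ lies in $\cd^{\ge s}\cap\cd^{\le r}$ and is an iterated extension of the $\Sigma^{-q}(\sigma^{q}X)$ for $q\in[s,r]$; applying $\Hom_\ct(-,\Sigma^{-p}S_j)$ to this filtration and bounding via composition length yields
\[
\dim_k \Hom_\ct(X,\Sigma^{-p}S_j)\;\le\;\sum_{q=s}^{r}\sum_{j'} x^{q}_{j'}\,\dim_k \Hom_\cd(S_{j'},\Sigma^{q-p}S_j),
\]
which depends only on $x$ and on the fixed Hom-dimensions between simples.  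Hence $\uSum(\beta_{\ge s}(X))\le u(x)$ for an explicit function $u$.  Dually, for $Y\in\cc$ with $\uSum(Y)\le u$ supported in $[l,r]$, iterating Lemma~\ref{lem:triangle-and-dimension-vector} on the extension presentation of $Y$ yields $\uDim(\sigma^{\ge l}Y)\le x(u)$, depending only on $u$ and the fixed cohomology vectors of the indecomposable summands $M_1,\ldots,M_n$.

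Transfer of discreteness now proceeds via the two maps $X\mapsto \beta_{\ge s}(X)$ and $Y\mapsto \sigma^{\ge l}(Y)$ (the latter lands in $\cd$ by (ST3)).  Assume $\cc$ is $M$-discrete: for $X\in\Ind^{\leqslant x}\cd$, $Y_X:=\beta_{\ge s}(X)\in\cc$ has $\uSum(Y_X)\le u(x)$ and so by Lemma~\ref{lem:equivalent-condition-for-M-discreteness} ranges over only finitely many iso classes.  Reconstruct $X$ from $Y_X$ via the triangle $Y_X\to X\to \beta_{\le s-1}(X)\to\Sigma Y_X$: Lemma~\ref{lem:minimal-resolution}(c) together with the above Hom estimate (applied to $p\le s-1$) shows $\beta_{\le s-1}(X)$ has bounded Hom-vector to shifted simples, and iterating the resolution procedure presents it as an iterated extension of finitely many $\add(M)$-pieces; the connecting morphism then lies in a finite-dimensional $\Hom$-space modulo the automorphism groups of the two endpoints, giving finitely many iso classes of $X$.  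The converse is entirely dual, using the triangle $\sigma^{\le l-1}(Y)\to Y\to \sigma^{\ge l}(Y)\to \Sigma\sigma^{\le l-1}(Y)$.

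The main obstacle will be the reconstruction step.  The ``tail'' objects $\beta_{\le s-1}(X)$ and $\sigma^{\le l-1}(Y)$ live in $\ct^{\le s-1}$ and need not belong to $\cd$ or to $\cc$, so controlling their iso classes requires a careful degree-by-degree induction on cohomology, terminated by Theorem~\ref{thm:basic-properties-of-ST-triple}(b).  One must further argue that the finite-dimensional moduli of connecting morphisms, modulo the action of the automorphism groups of the endpoints (which are algebraic groups over the algebraically closed field $k$), yields only finitely many iso classes.  This is where the discreteness hypothesis on the \emph{source} side is essential: any positive-dimensional continuous family of iso classes of $X$ (resp.\ $Y$) would, via the Hom-vector identification above, propagate back to a positive-dimensional family of iso classes of the associated $Y_X$ (resp.\ $\sigma^{\ge l}Y$) on the other side, contradicting the hypothesis.
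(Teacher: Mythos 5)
Your skeleton coincides with the paper's: both directions are proved by transporting indecomposables along the truncations $Y\mapsto\sigma^{\geq l}(Y)$ and $X\mapsto\beta_{\geq \bullet}(X)$, and your two boundedness estimates are exactly the maps $\varphi_l$, $\psi_l$ of the paper combined with Lemma~\ref{lem:triangle-and-dimension-vector} and Lemma~\ref{lem:triangles-and-sums}; the reduction to the finiteness of $\Ind^{\leqslant x}\cd$ and $\Ind_{\leqslant u}\cc$ via Lemmas~\ref{lem:equivalent-condition-for-D-discreteness} and~\ref{lem:equivalent-condition-for-M-discreteness} is also correct.

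The genuine gap is the reconstruction step, i.e.\ the injectivity of these maps on isoclasses and the preservation of indecomposability. Your argument --- that the connecting morphism of $Y_X\to X\to\beta_{\leq s-1}(X)\to\Sigma Y_X$ lies in a finite-dimensional $\Hom$-space and hence, modulo the automorphism groups of the endpoints, yields finitely many isoclasses of $X$ --- is false in general: a finite-dimensional space of connecting morphisms can produce a positive-dimensional family of pairwise non-isomorphic cones (for the Kronecker algebra the regular simples $R_\lambda$, $\lambda\in\mathbb{P}^1$, are the cones of the maps in the two-dimensional space $\Hom(P_1,P_2)$, on which $\Aut(P_1)\times\Aut(P_2)$ acts with infinitely many orbits). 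Your attempted rescue is circular: to ``propagate a family of $X$'s back to a family of $Y_X$'s'' you would already need to know that non-isomorphic $X$ give non-isomorphic $Y_X$, which is the injectivity in question. What closes the gap is making the recovery of $X$ \emph{canonical}, and this forces a different choice of truncation level: for $X\in\cd^{\geq l}$ one must take $\beta_{\geq l-1}(X)$ (one step \emph{below} the support, not $\beta_{\geq s}$ at the bottom of the support as you do). Then the rotated approximation triangle $\Sigma^{-1}\beta_{\leq l-2}(X)\to\beta_{\geq l-1}(X)\to X\to\beta_{\leq l-2}(X)$ has outer terms in $\ct^{\leq l-1}$ and $\ct^{\geq l}$ respectively, hence \emph{is} the canonical $t$-truncation triangle of $\beta_{\geq l-1}(X)$, so $X\cong\sigma^{\geq l}\beta_{\geq l-1}(X)$ with no connecting morphism to choose; moreover the induced map $\End_\ct(\beta_{\geq l-1}(X))\to\End_\ct(X)$ is surjective with kernel contained in the radical (Lemma~\ref{lem:property-of-truncations-on-morphisms} together with Theorem~\ref{thm:basic-properties-of-ST-triple}(b)), which gives preservation of indecomposability. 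With $\beta_{\geq s}$ the rotated triangle is not the canonical truncation triangle and the recovery fails. In the other direction no reconstruction is needed either: since $\Hom_\ct(\ct_{\geq p},\ct^{\leq p-1})=0$, Lemma~\ref{lem:property-of-truncations-on-morphisms} shows that $\sigma^{\geq p}$ is fully faithful on $\ct_{\geq p}$, so $Y\mapsto\sigma^{\geq l}(Y)$ already preserves indecomposability and reflects isomorphisms.
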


\begin{corollary}\label{cor:discreteness-implies-silting-discreteness}
If $\cc$ is $M$-discrete, then it is silting-discrete.
\end{corollary}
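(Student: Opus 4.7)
The plan is to obtain this corollary by stringing Theorem~\ref{thm:two-discretenesses} together with two results of Adachi--Mizuno--Yang that the introduction explicitly recalls. No new technical ingredient is required; the entire content of the argument is that the newly established theorem fills in the last missing arrow of a cycle of implications relating four discreteness notions for the pair $(\cc,\cd)$: $M$-discreteness of $\cc$, $\cd^0$-discreteness of $\cd$, silting-discreteness of $\cc$, and $t$-discreteness of $\cd$.

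Concretely, I would proceed in three steps. First, assume $\cc$ is $M$-discrete and apply Theorem~\ref{thm:two-discretenesses} to conclude that $\cd$ is $\cd^0$-discrete. Second, invoke the implication from \cite{AdachiMizunoYang18} quoted in the introduction, namely that $\cd^0$-discreteness of $\cd$ (i.e.\ $\ch$-discreteness in the notation of the introduction) implies $t$-discreteness of $\cd$; this is cited, not re-proved. Third, apply the other cited result from \cite{AdachiMizunoYang18}, the equivalence between silting-discreteness of $\cc$ and $t$-discreteness of $\cd$ under the ST-triple framework, to deduce that $\cc$ is silting-discrete.

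The ``main obstacle'' has in fact already been surmounted before one reaches the corollary: it is Theorem~\ref{thm:two-discretenesses} itself, whose proof occupies Section~\ref{s:equivalence}. Once that theorem is available, the present statement is an immediate bookkeeping consequence. Its function in the paper is exactly to close the diamond of implications so that all four discreteness notions become pairwise equivalent for an ST-triple $(\cc,\cd,M)$, as advertised in the introduction.
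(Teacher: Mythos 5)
Your proposal is correct and matches the paper's own proof exactly: the paper likewise applies Theorem~\ref{thm:two-discretenesses} to pass from $M$-discreteness of $\cc$ to $\cd^0$-discreteness of $\cd$, and then cites the two results of \cite{AdachiMizunoYang18} (Theorems 7.9 and 7.1, which are precisely the implications ``$\ch$-discrete $\Rightarrow$ $t$-discrete'' and ``silting-discrete $\Leftrightarrow$ $t$-discrete'' recalled in the introduction) to conclude that $\cc$ is silting-discrete.
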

\begin{proof}
Assume that $\cc$ is $M$-discrete. Then $\cd$ is $\cd^0$-discrete by Theorem~\ref{thm:two-discretenesses}. The statement then follows from \cite[Theorems 7.9 and 7.1]{AdachiMizunoYang18}.
\end{proof}

We split Theorem~\ref{thm:two-discretenesses} into two propositions and prove them in Sections~\ref{ss:M-discreteness-implies-D-discreteness} and~\ref{ss:D-discreteness-implies-M-discreteness}, respectively.  In Section~\ref{ss:cone-finiteness} we discuss the relation between discreteness and cone-finiteness.

\smallskip
Recall that there is a triple $(\ct_{\geq 0},\ct_{\leq 0}=\ct^{\leq 0},\ct^{\geq 0})$, where $(\ct_{\geq 0},\ct_{\leq 0})$ is a co-$t$-structure and $(\ct^{\leq 0},\ct^{\geq 0})$ is a $t$-structure. The two proofs below are almost dual to each other. The subtle but serious difference comes from the fact that truncations associated to $t$-structures are functorial while truncations associated to co-$t$-structures are not. However, the interplay between these truncations is interesting and plays an important role in the proofs.

\subsection{$M$-discreteness implies $\cd^0$-discreteness}
\label{ss:M-discreteness-implies-D-discreteness}

The aim of this subsection is to prove the following implication.

\begin{proposition}\label{prop:M-discreteness-implies-D-discreteness}
If $\cc$ is $M$-discrete, then $\cd$ is $\cd^0$-discrete.
\end{proposition}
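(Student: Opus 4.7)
The plan is to construct an injection on isoclasses from $\Ind^{\le x}\cd$ into the isoclasses of objects $\beta\in\cc$ with $\uSum(\beta)$ uniformly bounded in terms of $x$; the $M$-discreteness of $\cc$ then forces $\Ind^{\le x}\cd$ to be finite, proving the result via Lemmas~\ref{lem:equivalent-condition-for-M-discreteness} and~\ref{lem:equivalent-condition-for-D-discreteness}. Fix $x\in K_0(\cd^0)^{\oplus\mathbb{Z}}$ supported in an interval $[l,r]$. For $X\in\Ind^{\le x}\cd$ we have $X\in\cd^{\ge l}\cap\cd^{\le r}\subseteq\ct^{\le r}$, so Lemma~\ref{lem:minimal-resolution} applied with lower bound $l-1$ produces $\beta:=\beta_{\ge l-1}(X)\in\ct_{\ge l-1}\subseteq\cc$ (by Theorem~\ref{thm:basic-properties-of-ST-triple}(e)) together with a morphism $f_X\colon\beta\to X$ whose cone $\beta_{\le l-2}(X)$ lies in $\ct^{\le l-2}$.

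First I bound $\uSum(\beta)$ purely in terms of $x$. Via the $t$-structure truncations, $X$ is an iterated extension of the $\Sigma^{-q}\sigma^q(X)$ for $q\in[l,r]$; each $\sigma^q(X)\in\cd^0\cong\mod\End_\ct(M)$ is in turn an iterated extension of simples $S_i$ with multiplicities bounded by the coefficients $x^q_i$ of $[S_i]$ in $x^q$. Iterating Lemma~\ref{lem:triangles-and-sums} with lower bound $l-1$ yields
\[
\uSum(\beta)\le u:=\sum_{q=l}^{r}\sum_i x^q_i\,\uSum\bigl(\beta_{\ge l-1}(\Sigma^{-q}S_i)\bigr),
\]
which depends only on $x$. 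By Lemma~\ref{lem:equivalent-condition-for-M-discreteness} the set $\Ind_{\le u}\cc$ is finite; since $\cc$ is Krull--Schmidt and the multiplicities of indecomposable summands in $\beta$ are bounded by components of $u$, only finitely many isoclasses of $\beta$ arise.

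Next I reconstruct $X$ from $\beta$. The orthogonality $\Hom_\ct(\ct^{\le l-1},\ct^{\ge l})=0$ applied to both $X$ and $\Sigma^{-1}X$ in $\ct^{\ge l}$ shows that $f_X$ lifts uniquely through the truncation triangle $\sigma^{\le l-1}(\beta)\to\beta\to\sigma^{\ge l}(\beta)\to\Sigma\sigma^{\le l-1}(\beta)$ to a morphism $g\colon\sigma^{\ge l}(\beta)\to X$. Its source and target both lie in $\cd$, and the family $\{\Hom_\ct(M,\Sigma^p-)\}_{p\in\mathbb{Z}}$ is jointly conservative on $\cd$ (any $Y\in\cd$ annihilated by all of them lies in $\bigcap_p\ct^{\le p}=0$ by Theorem~\ref{thm:basic-properties-of-ST-triple}(b)), so to show $g$ is an isomorphism it suffices to check this on each $\Hom_\ct(M,\Sigma^p-)$. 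For $p<l$ both groups vanish by $\ct^{\ge l}$-membership. For $p\ge l$ the truncation triangle of $\beta$ yields $\Hom_\ct(M,\Sigma^p\sigma^{\ge l}(\beta))\cong\Hom_\ct(M,\Sigma^p\beta)$, and the triangle $\beta\to X\to\beta_{\le l-2}(X)$ gives $\Hom_\ct(M,\Sigma^p\beta)\cong\Hom_\ct(M,\Sigma^p X)$ (using $\Hom_\ct(M,\Sigma^m\beta_{\le l-2}(X))=0$ for all $m\ge l-1$). Composing these identifications recovers $g_\ast$, whence $g$ is an isomorphism and $X\cong\sigma^{\ge l}(\beta)$.

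The critical subtlety is the choice of lower bound $l-1$ rather than $l$ in Lemma~\ref{lem:minimal-resolution}: with $l_0=l$ the cone of $\beta_{\ge l}(X)\to X$ lies in $\ct^{\le l-1}$, giving an isomorphism on $\Hom_\ct(M,\Sigma^p-)$ only for $p\ge l+1$ and merely a surjection at $p=l$, so the natural factored map $\sigma^{\ge l}(\beta_{\ge l}(X))\to X$ need not be an isomorphism. Shifting the lower bound down by one buys exactly the additional cohomological degree required, at the fixed cost of a single ``projective cover layer'' in $\uSum(\beta)$. The conservative property of the cohomology functors is the indispensable second ingredient, promoting this componentwise cohomological match to an honest isomorphism of objects in $\cd$.
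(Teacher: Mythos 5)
Your proof is correct and is essentially the paper's argument: both send $X$ to $\beta_{\geq l-1}(X)$, bound $\uSum(\beta_{\geq l-1}(X))$ by $\psi_l(x)$ via iterated application of Lemma~\ref{lem:triangles-and-sums} to a filtration of $X$ by shifts of simples, and recover $X$ as $\sigma^{\geq l}\beta_{\geq l-1}(X)$ (the paper gets this at once by observing that the rotated triangle is the canonical truncation triangle for $(\ct^{\leq l-1},\ct^{\geq l})$, where your conservativity check does the same work by hand). The only organisational difference is that the paper keeps the target inside $\Ind_{\leqslant u}\cc$ by first proving $\beta_{\geq l-1}$ preserves indecomposability (Proposition~\ref{prop:hard-truncation-preserves-indecomposability}), whereas you allow decomposable images and deduce finiteness from Krull--Schmidt plus additivity of $\uSum$ on direct sums --- both routes are fine.
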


The following result is a direct consequence of Lemma~\ref{lem:property-of-truncations-on-morphisms}.

\begin{proposition}\label{prop:faithful-truncation}
Let $p\in\mathbb{Z}$. The functor $\sigma^{\geq p}\colon\ct\to\ct^{\geq p}$ restricts to a fully faithful functor
\[
\sigma^{\geq p}\colon \ct_{\geq p}\to\ct^{\geq p}.
\]
\end{proposition}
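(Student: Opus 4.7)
The plan is to reduce full faithfulness to a pair of Hom-vanishing conditions via Lemma~\ref{lem:property-of-truncations-on-morphisms}, and then to deduce those vanishings from the co-$t$-structure axiom together with the coincidence $\ct_{\le 0} = \ct^{\le 0}$ from Theorem~\ref{thm:basic-properties-of-ST-triple}(d).

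Fix $p\in\mathbb{Z}$ and take $X,Y\in\ct_{\ge p}$. The functor $\sigma^{\ge p}$ automatically lands in $\ct^{\ge p}$, so the only point is to show that the map
\[
\sigma^{\ge p}(X,Y)\colon \Hom_\ct(X,Y)\longrightarrow \Hom_\ct(\sigma^{\ge p}X,\sigma^{\ge p}Y)
\]
is bijective. By Lemma~\ref{lem:property-of-truncations-on-morphisms}, it is enough to establish the two vanishings
\[
\Hom_\ct(X,\sigma^{\le p-1}Y)=0 \qquad\text{and}\qquad \Hom_\ct(X,\Sigma\sigma^{\le p-1}Y)=0.
\]

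For these, note that $\sigma^{\le p-1}Y\in\ct^{\le p-1}$ by construction, and $\Sigma\sigma^{\le p-1}Y\in\ct^{\le p-2}\subseteq\ct^{\le p-1}$. Theorem~\ref{thm:basic-properties-of-ST-triple}(d) tells us that $\ct_{\le 0}=\ct^{\le 0}$, and shifting this identity yields $\ct_{\le p-1}=\ct^{\le p-1}$; hence both $\sigma^{\le p-1}Y$ and $\Sigma\sigma^{\le p-1}Y$ lie in $\ct_{\le p-1}$. The co-$t$-structure axiom (2) from Section~\ref{ss:silting}, applied with the shift by $p$, gives $\Hom_\ct(U,V)=0$ whenever $U\in\ct_{\ge p}$ and $V\in\ct_{\le p-1}$, so both Hom-spaces above vanish. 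This delivers the required bijection and hence the proposition.

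I do not anticipate any real obstacle here: the statement is essentially the observation that the two truncations built from the same ST-triple are ``orthogonal'' in the right direction, which is exactly what (ST2) together with $\ct_{\le 0}=\ct^{\le 0}$ encodes. The only subtlety worth flagging is making sure one invokes the shifted form of the co-$t$-structure orthogonality (replacing the pair $(\ct_{\ge 1},\ct_{\le 0})$ by $(\ct_{\ge p},\ct_{\le p-1})$) and uses the inclusion $\ct^{\le p-2}\subseteq\ct^{\le p-1}$ to handle both the injectivity and the surjectivity clauses of Lemma~\ref{lem:property-of-truncations-on-morphisms} uniformly.
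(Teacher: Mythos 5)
Your proof is correct and follows essentially the same route as the paper's: both reduce full faithfulness to the two Hom-vanishings in Lemma~\ref{lem:property-of-truncations-on-morphisms} and obtain them from the fact that $\sigma^{\leq p-1}Y$ and $\Sigma\sigma^{\leq p-1}Y$ lie in $\ct^{\leq p-1}=\ct_{\leq p-1}$, which is right-orthogonal to $\ct_{\geq p}$. You merely spell out the appeal to Theorem~\ref{thm:basic-properties-of-ST-triple}(d) and the shifted co-$t$-structure axiom that the paper leaves implicit.
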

\begin{proof}
Take $X,Y\in\ct_{\geq p}$. Since $\sigma^{\leq p-1}(Y)$ and $\Sigma\sigma^{\leq p-1}(Y)$ belong to $\ct^{\leq p-1}$, we have $\Hom_\ct(X,\sigma^{\leq p-1}(Y))=0=\Hom_\ct(X,\Sigma\sigma^{\leq p-1}(Y))$. The desired result follows from Lemma~\ref{lem:property-of-truncations-on-morphisms}.
\end{proof}

We immediately obtain the following corollary, taking into account that $\ct_{\geq p}\supseteq\ct_{\geq l}$ for $p\leq l$.

\begin{corollary}\label{cor:faithful-truncation}
For $Y\in\ct_{\geq l}$, the following are equivalent:
\begin{itemize}
\item[(i)] $Y$ is indecomposable,
\item[(ii)] $\sigma^{\geq p}(Y)$ is indecomposable for some $p\leq l$, 
\item[(iii)] $\sigma^{\geq p}(Y)$ is indecomposable for all $p\leq l$.
\end{itemize}
Moreover, for $Y,Z\in\ct_{\geq l}$, the following are equivalent:
\begin{itemize}
\item[(1)] $Y\cong Z$,
\item[(2)] $\sigma^{\geq p}(Y)\cong\sigma^{\geq p}(Z)$ for some $p\leq l$,
\item[(3)] $\sigma^{\geq p}(Y)\cong\sigma^{\geq p}(Z)$ for all $p\leq l$.
\end{itemize}
\end{corollary}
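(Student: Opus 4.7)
The plan is to reduce both equivalences to purely formal consequences of Proposition~\ref{prop:faithful-truncation}. First I would make the observation the author already flagged: since $\ct_{\geq 1}\subseteq\ct_{\geq 0}$, iterating gives $\ct_{\geq l}\subseteq\ct_{\geq p}$ whenever $p\leq l$, so the restriction
\[
\sigma^{\geq p}|_{\ct_{\geq l}}\colon \ct_{\geq l}\longrightarrow \ct^{\geq p}
\]
is still fully faithful by Proposition~\ref{prop:faithful-truncation}. I would also check that both endpoints sit inside Krull--Schmidt categories: $\ct_{\geq 0}$, being the smallest full subcategory containing $M$ and closed under extensions, direct summands and negative shifts, is contained in $\cc=\thick(M)$, so $\ct_{\geq l}\subseteq\cc$; on the other side $\ct^{\geq p}\subseteq\cd$ by (ST3). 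Theorem~\ref{thm:basic-properties-of-ST-triple}(a) then makes both $\cc$ and $\cd$ Hom-finite Krull--Schmidt, so in particular idempotents split in $\ct_{\geq l}$ and in $\ct^{\geq p}$.

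For the second block (isomorphism) the implications (1)$\Rightarrow$(3)$\Rightarrow$(2) are automatic. For (2)$\Rightarrow$(1) I would invoke the standard fact that a fully faithful functor reflects isomorphisms: an iso $\phi\colon\sigma^{\geq p}(Y)\iso\sigma^{\geq p}(Z)$ and its inverse lift, by fullness, to morphisms $\psi\colon Y\to Z$ and $\psi'\colon Z\to Y$ in $\ct_{\geq l}$, and faithfulness applied to $\sigma^{\geq p}(\psi'\psi)=\id=\sigma^{\geq p}(\id_Y)$ and $\sigma^{\geq p}(\psi\psi')=\id=\sigma^{\geq p}(\id_Z)$ shows that $\psi$ is an isomorphism.

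For the first block (indecomposability) the key input is the induced ring isomorphism $\End_\ct(Y)\iso\End_\ct(\sigma^{\geq p}(Y))$. The implication (iii)$\Rightarrow$(ii) is trivial; the remaining two implications follow by transporting idempotents across this ring iso. If $Y=Y_1\oplus Y_2$ is a nontrivial decomposition, then faithfulness (equivalently, conservativity) forces $\sigma^{\geq p}(Y_i)\neq 0$, giving a nontrivial decomposition of $\sigma^{\geq p}(Y)$; conversely, a nontrivial idempotent in $\End_\ct(\sigma^{\geq p}(Y))$ pulls back to a nontrivial idempotent in $\End_\ct(Y)$, which splits in the Krull--Schmidt category $\cc$ and produces a nontrivial decomposition of $Y$. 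This proves (i)$\Leftrightarrow$(ii)$\Leftrightarrow$(iii). No real obstacle is anticipated: once Proposition~\ref{prop:faithful-truncation} is in hand, the only non-formal ingredient is idempotent-splitting, which is supplied by Theorem~\ref{thm:basic-properties-of-ST-triple}(a).
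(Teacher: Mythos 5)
Your argument is correct and is exactly the paper's route: the paper deduces the corollary "immediately" from Proposition~\ref{prop:faithful-truncation} together with the containment $\ct_{\geq l}\subseteq\ct_{\geq p}$ for $p\leq l$, and you have simply spelled out the standard formal consequences (a fully faithful additive functor between Krull--Schmidt categories reflects isomorphisms and preserves/reflects indecomposability via the induced isomorphism of endomorphism rings). The extra checks you record --- $\ct_{\geq l}\subseteq\cc$ and $\ct^{\geq p}\subseteq\cd$, so both sides are Hom-finite Krull--Schmidt by Theorem~\ref{thm:basic-properties-of-ST-triple}(a) --- are the right ones and are implicitly what the paper relies on.
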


For $l\in\mathbb{Z}$, consider the group homomorphism
\[
\varphi_l\colon K_0^{\sp}(\add(M))^{\oplus\mathbb{Z}}\longrightarrow K_0(\cd^0)^{\oplus\mathbb{Z}}
\]
defined by $\uSum(\Sigma^p N)\mapsto \uDim(\sigma^{\geq l}(\Sigma^p N))$ for $N\in\add(M)$ and $p\in\mathbb{Z}$. It restricts to a map 
\[
\varphi_l\colon (K_0^{\sp}(\add(M))^+)^{\oplus\mathbb{Z}}\longrightarrow (K_0(\cd^0)^+)^{\oplus\mathbb{Z}}.
\]

\begin{proof}[Proof of Proposition~\ref{prop:M-discreteness-implies-D-discreteness}]
Assume that $\cd$ is $\cd^0$-discrete.

Take $u\in(K_0^{\sp}(\add(M))^+)^{\oplus\mathbb{Z}}$. Then there exist $r,l\in\mathbb{Z}$ such that $u^p=0$ for $p<l$ and for $p>r$. Put $x=\varphi_l(u)\in (K_0(\cd^0)^+)^{\oplus\mathbb{Z}}$.

Let $Y\in\Ind_u\cc$, \ie $\uSum(Y)=u$. Then by Lemma~\ref{lem:minimal-resolution} there exist $M^r,\ldots,M^l\in\add(M)$ with $\usum(M^p)=u^p$ such that $Y\in\Sigma^{-r}M^r\ast\cdots\ast\Sigma^{-l}M^l$. By repeatedly applying Lemma~\ref{lem:triangle-and-dimension-vector}, we obtain
\[
\uDim(\sigma^{\geq l}(X))\leq\uDim(\sigma^{\geq l}(\Sigma^{-r}M^r))+\ldots+\uDim(\sigma^{\geq l}(\Sigma^{-l}M^l))=\varphi_l(u)=x.
\] 
Therefore by Corollary~\ref{cor:faithful-truncation}, there is an injective map
\[
\xymatrix@R=0.5pc{
\Ind_u\cc\ar[r] & \Ind^{\leq x}\cd.\\
X\ar@{|->}[r] & \sigma^{\geq l}(X)
}
\]
By Lemma~\ref{lem:equivalent-condition-for-D-discreteness}, the $\cd^0$-discreteness of $\cd$ implies that $\Ind^{\leq x}\cd$ is finite. It follows that $\Ind_u\cd$ is finite, as desired.
\end{proof}

\subsection{$\cd^0$-discreteness implies $M$-discreteness}
\label{ss:D-discreteness-implies-M-discreteness}

The aim of this subsection is to prove the following implication.

\begin{proposition}\label{prop:D-discreteness-implies-M-discreteness}
If $\cd$ is $\cd^0$-discrete, then $\cc$ is $M$-discrete.
\end{proposition}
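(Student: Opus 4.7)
The plan is to construct, for each $u\in (K_0^{\sp}(\add(M))^+)^{\oplus\mathbb{Z}}$, an injection $\Ind_u\cc\hookrightarrow\Ind^{\le x}\cd$ for some $x\in (K_0(\cd^0)^+)^{\oplus\mathbb{Z}}$ depending on $u$, and then conclude via Lemma~\ref{lem:equivalent-condition-for-D-discreteness}. The construction is the same as in the proof of Proposition~\ref{prop:M-discreteness-implies-D-discreteness}: I send $Y$ to its $t$-structure truncation $\sigma^{\ge l}(Y)$, which is the functorial side of the $t$-structure/co-$t$-structure interplay.

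First I choose $r\ge l$ with $u^p=0$ outside $[l,r]$ and set $x=\varphi_l(u)$. For $Y\in\Ind_u\cc$, Lemma~\ref{lem:minimal-resolution} produces $M^l,\ldots,M^r\in\add(M)$ with $\usum(M^p)=u^p$ and
\[
Y\in\Sigma^{-r}M^r\ast\cdots\ast\Sigma^{-l}M^l.
\]
Each $\Sigma^{-p}M^p$ lies in $\ct_{\ge p}\subseteq\ct_{\ge l}$, and $\ct_{\ge l}$ is closed under extensions and direct summands by Theorem~\ref{thm:basic-properties-of-ST-triple}(d), so $Y\in\ct_{\ge l}$. Then $\sigma^{\ge l}(Y)\in\ct^{\ge l}\subseteq\cd$ by (ST3), and by Proposition~\ref{prop:faithful-truncation} together with Corollary~\ref{cor:faithful-truncation}, $\sigma^{\ge l}$ is fully faithful on $\ct_{\ge l}$ and preserves both indecomposability and isomorphism classes. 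Hence $Y\mapsto\sigma^{\ge l}(Y)$ gives an injection from $\Ind_u\cc$ into the indecomposable isoclasses of $\cd$.

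To control the image, I would iterate Lemma~\ref{lem:triangle-and-dimension-vector} along the triangles realising the above filtration, obtaining
\[
\uDim(\sigma^{\ge l}(Y))\le \sum_{p=l}^{r}\uDim\bigl(\sigma^{\ge l}(\Sigma^{-p}M^p)\bigr)=\varphi_l(u)=x.
\]
Thus the injection lands inside $\Ind^{\le x}\cd$, which is finite by Lemma~\ref{lem:equivalent-condition-for-D-discreteness} and the $\cd^0$-discreteness of $\cd$. Hence $\Ind_u\cc$ is finite for every $u$, proving that $\cc$ is $M$-discrete.

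Essentially there is no real obstacle in this direction: once Proposition~\ref{prop:faithful-truncation} is established, the same map that proved the reverse implication also works here. The subtle asymmetry alluded to in the section introduction---truncations $\beta_{\ge l}$ of co-$t$-structures being non-functorial while $t$-structure truncations are---does not bite here, precisely because we only ever apply the genuinely functorial $\sigma^{\ge l}$ and merely invoke the existence of the filtration of $Y$ afforded by $\beta_{\ge l}$, without needing $\beta_{\ge l}$ to act on morphisms.
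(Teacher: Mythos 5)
Your proof is correct and is essentially the paper's own argument for this implication: the injection $\Ind_u\cc\hookrightarrow\Ind^{\le x}\cd$, $Y\mapsto\sigma^{\ge l}(Y)$, with $x=\varphi_l(u)$, using Lemma~\ref{lem:minimal-resolution}, Lemma~\ref{lem:triangle-and-dimension-vector} and Corollary~\ref{cor:faithful-truncation}, followed by Lemma~\ref{lem:equivalent-condition-for-D-discreteness}. (Note that the two proof bodies in Section~\ref{s:equivalence} appear to have been swapped: the argument you give is the one printed under Proposition~\ref{prop:M-discreteness-implies-D-discreteness}, whose opening line assumes $\cd^0$-discreteness and concludes $M$-discreteness, exactly as you do.)
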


The key point of our proof is the following result, which, specialising to the ST-triple $(K^b(\proj A),\cd^b(\mod A),A_A)$, strengthens \cite[Proposition 2]{HanZhang16}.

\begin{proposition}\label{prop:hard-truncation-preserves-indecomposability}
Let $l\in\mathbb{Z}$. For $X\in\cd^{\geq l}$, there is a surjective algebra homomorphism
\[
\End_\ct(\beta_{\geq l-1}(X))\longrightarrow \End_\ct(X),
\]
whose kernel is contained in the radical of $\End_\ct(\beta_{\geq l-1}(X))$. As a consequence, the following are equivalent:
\begin{itemize}
\item[(i)] $X$ is indecomposable,
\item[(ii)] $\beta_{\geq p}(X)$ is indecomposable for some $p\leq l-1$,
\item[(iii)] $\beta_{\geq p}(X)$ is indecomposable for all $p\leq l-1$. 
\end{itemize}
Moreover, if $X,Y\in\cd^{\geq l}$ satisfy $\beta_{\geq l-1}(X)\cong\beta_{\geq l-1}(Y)$, then $X\cong Y$.
\end{proposition}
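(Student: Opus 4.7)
My plan is to first extract the surjection $\pi$ from two Hom long exact sequences applied to the defining triangle $\beta_{\geq l-1}(X)\xrightarrow{f_X}X\to\beta_{\leq l-2}(X)\to\Sigma\beta_{\geq l-1}(X)$. Since $X\in\cd^{\geq l}=\ct^{\geq l}$ while both $\beta_{\leq l-2}(X)$ and $\Sigma^{-1}\beta_{\leq l-2}(X)$ lie in $\ct^{\leq l-1}$, the $t$-structure orthogonality will make $f_X^{*}\colon\End_\ct(X)\to\Hom_\ct(\beta_{\geq l-1}(X),X)$ bijective. Since $\beta_{\geq l-1}(X)\in\ct_{\geq l-1}$ and $\beta_{\leq l-2}(X)\in\ct^{\leq l-2}=\ct_{\leq l-2}$ by Theorem~\ref{thm:basic-properties-of-ST-triple}(d), the co-$t$-structure orthogonality $\Hom_\ct(\ct_{\geq l-1},\ct_{\leq l-2})=0$ will make $(f_X)_{*}\colon\End_\ct(\beta_{\geq l-1}(X))\to\Hom_\ct(\beta_{\geq l-1}(X),X)$ surjective. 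Composing yields the surjective ring homomorphism $\pi$ sending $\phi$ to the unique $\psi$ with $\psi f_X=f_X\phi$, whose kernel is the two-sided ideal of endomorphisms factoring through the connecting morphism $\Sigma^{-1}\beta_{\leq l-2}(X)\to\beta_{\geq l-1}(X)$.

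The main obstacle will be showing $\ker\pi\subseteq\rad R$ with $R=\End_\ct(\beta_{\geq l-1}(X))$. Since $R$ is finite-dimensional by Theorem~\ref{thm:basic-properties-of-ST-triple}(a), this reduces to $\mathrm{id}-\phi$ being invertible for every $\phi\in\ker\pi$, and hence to the right minimality of $f_X$: any $\psi\in R$ with $f_X\psi=f_X$ is an isomorphism. I plan to prove right minimality by downward induction on $i\in\{l-1,\ldots,r\}$, claiming that each $f^{(i)}\colon\beta_{\geq i}(X)\to X$ arising in the iterative construction of Lemma~\ref{lem:minimal-resolution} is right minimal. The base case $i=r$ is precisely the definition of the minimal right $\add(\Sigma^{-r}M)$-approximation. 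For the inductive step, the octahedral axiom applied to the composition $X\xrightarrow{g^{(i)}}\beta_{\leq i-1}(X)\to\beta_{\leq i-2}(X)$ produces a triangle $\beta_{\geq i}(X)\xrightarrow{\iota}\beta_{\geq i-1}(X)\xrightarrow{p}\Sigma^{-(i-1)}M^{i-1}\to\Sigma\beta_{\geq i}(X)$ satisfying $f^{(i-1)}\iota=f^{(i)}$ and $f'_{i-1}p=g^{(i)}f^{(i-1)}$, where $f'_{i-1}$ is the minimal approximation introduced at that step. Given $\psi$ with $f^{(i-1)}\psi=f^{(i-1)}$, the map $(\psi-\mathrm{id})\iota$ factors through $\Sigma^{-1}\beta_{\leq i-2}(X)\in\ct_{\leq i-1}$ and so vanishes by the co-$t$-structure orthogonality $\Hom_\ct(\ct_{\geq i},\ct_{\leq i-1})=0$; hence $\psi\iota=\iota$ and $\psi$ descends to an endomorphism $\bar\psi$ of $\Sigma^{-(i-1)}M^{i-1}$. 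The above commutativities give $f'_{i-1}\bar\psi p=f'_{i-1}p$, and the technical heart of the argument will be upgrading this identity—using the minimality of $f'_{i-1}$ together with the specific form of the connecting morphism $\Sigma^{-(i-1)}M^{i-1}\to\Sigma\beta_{\geq i}(X)$—to conclude that $\bar\psi$ is an isomorphism, after which the triangulated five-lemma promotes this to $\psi$ itself being an isomorphism and closes the induction.

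Once $\ker\pi\subseteq\rad R$ is established, $\pi$ induces an isomorphism $R/\rad R\iso\End_\ct(X)/\rad\End_\ct(X)$, so by the Krull--Schmidt property of Theorem~\ref{thm:basic-properties-of-ST-triple}(a) the object $\beta_{\geq l-1}(X)$ is indecomposable if and only if $X$ is. The full equivalence of (i), (ii) and (iii) then follows by applying this one-step statement at level $p+1$ for each $p\leq l-1$, using $X\in\cd^{\geq l}\subseteq\cd^{\geq p+1}$. Finally, given an isomorphism $\iota\colon\beta_{\geq l-1}(X)\iso\beta_{\geq l-1}(Y)$, the first Hom argument applied with $Y$ in place of $X$ gives $\Hom_\ct(X,Y)\iso\Hom_\ct(\beta_{\geq l-1}(X),Y)$, so $f_Y\iota$ lifts uniquely to $\tilde\iota\colon X\to Y$ with $\tilde\iota f_X=f_Y\iota$; a symmetric construction produces $\tilde\iota'\colon Y\to X$, and the uniqueness in the two Hom-bijections forces $\tilde\iota\tilde\iota'=\mathrm{id}_Y$ and $\tilde\iota'\tilde\iota=\mathrm{id}_X$, so $\tilde\iota$ is the desired isomorphism.
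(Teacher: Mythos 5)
Your construction of the surjection is correct and is in substance the paper's own: the paper simply packages your two long exact sequences into the single observation that the rotated triangle $\Sigma^{-1}\beta_{\leq l-2}(X)\to\beta_{\geq l-1}(X)\xrightarrow{f_X}X\to\beta_{\leq l-2}(X)$ is the canonical truncation triangle of $\beta_{\geq l-1}(X)$ for $(\ct^{\leq l-1},\ct^{\geq l-1})$, so that $X\cong\sigma^{\geq l}\beta_{\geq l-1}(X)$ and the surjection with your kernel $I$ comes from Lemma~\ref{lem:property-of-truncations-on-morphisms}. Your handling of the ``Moreover'' part and the deduction of (i)--(iii) from the one-step statement are also fine.

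The divergence, and the genuine gap, is in proving $\ker\pi\subseteq\rad\End_\ct(\beta_{\geq l-1}(X))$. Your reduction to right minimality of $f_X$ is valid, but the downward induction you propose stops exactly where you label it ``the technical heart'': from $f'_{i-1}\bar\psi p=f'_{i-1}p$ you cannot invoke the minimality of $f'_{i-1}$ until $p$ has been cancelled, and no argument for that is given. As written this is a plan, not a proof. The step can be filled: precomposition with $p$ is injective on $\Hom_\ct(\Sigma^{-(i-1)}M^{i-1},\beta_{\leq i-1}(X))$ because the third vertex of the triangle containing $p$ is $\Sigma\beta_{\geq i}(X)\in\ct_{\geq i+1}$ while $\beta_{\leq i-1}(X)\in\ct_{\leq i-1}$, so the relevant Hom space vanishes by co-$t$-structure orthogonality; this upgrades the identity to $f'_{i-1}\bar\psi=f'_{i-1}$, right minimality of the approximation $f'_{i-1}$ makes $\bar\psi$ invertible, and your five-lemma step then closes the induction. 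You should also be aware that the paper sidesteps this entire induction: it shows directly that any $a\in I$ induces the zero map $\Hom_\ct(a,\Sigma^pS)=0$ for every simple $S$ of $\cd^0$ and every $p\in\mathbb{Z}$ (using Lemma~\ref{lem:minimal-resolution}(b),(c) in the range $p\le -l+1$ and $\beta_{\geq l-1}(X)\in\ct_{\geq l-1}$ in the complementary range), and then observes that an identity summand $\id_{Y_i}$ of a non-radical $a$ would force $Y_i\in\bigcap_p\ct^{\leq p}=0$. That argument is a few lines and needs no minimality of the intermediate approximations, whereas your route, once completed, additionally establishes the right minimality of $f_X$ itself, which is a slightly stronger and independently useful fact.
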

\begin{proof} Rotate the triangle in Lemma~\ref{lem:minimal-resolution}, we obtain a triangle
\[
\xymatrix{
\Sigma^{-1}\beta_{\leq l-2}(X)\ar[r]^(0.55){h_X} & \beta_{\geq l-1}(X)\ar[r]^(0.6){f_X} & X\ar[r]^(0.35){g_X} & \beta_{\leq l-2}(X).
}
\]
Since $\Sigma^{-1}\beta_{\leq l-2}(X)\in\ct^{\leq l-1}$ and $X\in\cd^{\geq l}=\ct^{\geq l}$,  this is the canonical triangle of $\beta_{\geq l-1}(X)$ associated to the $t$-structure $(\ct^{\leq l-1},\ct^{\geq l-1})$. In particular, $X\cong\sigma^{\geq l}\beta_{\geq l-1}(X)$ and $\Sigma^{-1}\beta_{\leq l-2}(X)\cong\sigma^{\leq l-1}\beta_{\geq l-1}(X)$. The `Moreover' part follows immediately.

Consider the algebra homomorphism induced by the functor $\sigma^{\geq l}$
\[
\End_\ct(\beta_{\geq l-1}(X))\longrightarrow \End_\ct(\sigma^{\geq l}\beta_{\geq l-1}(X))\cong \End_\ct(X).
\]
By Lemma~\ref{lem:property-of-truncations-on-morphisms} this homomorphism is surjective, because $\beta_{\geq l-1}(X)\in\ct_{\geq l-1}$ and $\Sigma\sigma^{\leq l-1}\beta_{\geq l-1}(X)\in\ct^{\leq l-2}$. Moreover, the kernel of this map is 
\[
I:=\{a\colon \beta_{\geq l-1}(X)\to\beta_{\geq l-1}(X)\mid a \text{ factors through } h_X\}.
\]
If $a\in I$ and $S$ is a simple object of $\cd^0$, then $\Hom(a,\Sigma^p S)=0$ for all $p\leq -l+1$ because $\Hom(h_X,\Sigma^p S)=0$. Moreover, $\Hom_\ct(\beta_{\geq l-1}(X),\Sigma^p S)=0$ for all $p>-l+1$ because $\beta_{\geq l-1}(X)\in\ct_{\geq l-1}$. Therefore $\Hom(a,\Sigma^pS)=0$ for all $p\in\mathbb{Z}$. We claim that $a$ belongs to the radical. Otherwise, write $\beta_{\geq l-1}(X)=Y_1\oplus\ldots \oplus Y_s$ with $Y_1,\ldots,Y_s$ indecomposable. Then $a$ has a summand $\lambda\cdot \id_{Y_i}$ with $\lambda\in k^\times$ for some $i=1,\ldots,s$. Thus restricting $\lambda^{-1}a$ to $Y_i$ we obtain $\id_{Y_i}$. It follows that $\Hom(\id_{Y_i},\Sigma^p S)=0$ for all $p\in\mathbb{Z}$, which implies that $Y_i\in\bigcap_{p\in\mathbb{Z}}\ct^{\leq p}=0$, a contradiction.
\end{proof}

For $l\in\mathbb{Z}$, consider the group homomorphism
\[
\psi_l\colon  K_0(\cd^0)^{\oplus\mathbb{Z}}\longrightarrow K_0^{\sp}(\add(M))^{\oplus\mathbb{Z}}
\]
defined by $\uDim(\Sigma^p S)\mapsto \uSum(\beta_{\geq l-1}(\Sigma^p S))$ for any simple object $S$ of $\cd^0$ and any $p\in\mathbb{Z}$. It restricts to 
\[
\psi_l\colon (K_0(\cd^0)^+)^{\oplus\mathbb{Z}} \longrightarrow (K_0^{\sp}(\add(M))^+)^{\oplus\mathbb{Z}} .
\]

\begin{proof}[Proof of Proposition~\ref{prop:D-discreteness-implies-M-discreteness}]
Assume that $\cc$ is $M$-discrete.

Take $x\in (K_{0}(\cd^0)^+)^{\oplus\mathds{Z}}$.  Let $l$ be the maximal integer such that $x^p=0$ for all $p<l$ and put $u=\psi_l(x)$. We will define a map 
\[
h\colon \Ind^x\cd\longrightarrow \Ind_{\leqslant u} \cc
\]
and show that it is injective. By Lemma~\ref{lem:equivalent-condition-for-M-discreteness}, the $M$-discreteness of $\cc$ implies that $\Ind_{\leqslant u} \cc$ is finite. It follows that $\Ind^x\cd$ is finite, as desired.

\smallskip

\noindent{\bf Step 1:} \emph{The definition and injectivity of $h$.} Let $X\in\cd$ be indecomposable with $\uDim(X)=x$. Define $h(X)=\beta_{\geqslant l-1}(X)$. By Proposition~\ref{prop:hard-truncation-preserves-indecomposability}, $h(X)$ is indecomposable and $h$ is injective.

\smallskip
\noindent{\bf Step 2:} \emph{The well-definedness of $h$.} Let $X\in \cd$ be indecomposable with $\uDim(X)=x$. We show by induction on $x$ that $\uSum(\beta_{\geq l-1}(X))\leq u$. If $X$ is a shift of a simple object of $\cd^0$, the inequality holds by the definition of $\psi_l$. Otherwise, take a simple subobject $S$ of $\sigma^l(X)$, consider the composition
\[
\Sigma^{-l}S\to \Sigma^{-l}\sigma^l(X)\to X,
\]
and form a triangle
\[
\Sigma^{-l}S\to X\to X'\to \Sigma^{-l+1}S.
\]
It follows from the octahedron axiom that  $x=x'+x''$, where $x'=\uDim(X')$ and $x''=\uDim(\Sigma^{-l}S)$. Thus
\begin{align*}
\uSum(\beta_{\geq l-1}(X))&\leq \uSum(\beta_{\geq l-1}(\Sigma^{-l}S))+\uSum(\beta_{l-1}(X'))\\
&\leq \psi_l(x'')+\psi_l(x')=\psi_l(x)=u,
\end{align*}
where the first inequality follows from Lemma~\ref{lem:triangles-and-sums}, and the second one follows from induction hypothesis.
\end{proof}

\subsection{Cone-finiteness}
\label{ss:cone-finiteness}

Following \cite{BroomheadPauksztelloPloog18}, we say that a triangulated category is \emph{cone finite} if for any two objects $X$ and $Y$, the subcategory $X\ast Y$ has only finitely many isoclasses of objects. Note that this is a property that passes to subcategories.

\begin{corollary}\label{cor:cone-finiteness}
The following conditions are equivalent:
\begin{itemize}
\item[(i)] $\cc$ is $M$-discrete,
\item[(ii)] $\cc$ is cone finite,
\item[(iii)] $\cd$ is $\cd^0$-discrete,
\item[(iv)] $\cd$ is cone finite.
\end{itemize}
\end{corollary}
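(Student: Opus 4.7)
The plan is to combine Theorem~\ref{thm:two-discretenesses}, which already provides (i)$\Leftrightarrow$(iii), with the two additional equivalences (iii)$\Leftrightarrow$(iv) and (i)$\Leftrightarrow$(ii). These two equivalences are formally dual---one uses the bounded $t$-structure on $\cd$ together with Lemma~\ref{lem:triangle-and-dimension-vector}, the other uses the bounded co-$t$-structure on $\cc$ together with Lemma~\ref{lem:triangles-and-sums}---so I would carry out one carefully and indicate the modifications needed for the other.

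For (iii)$\Rightarrow$(iv), given $X,Y\in\cd$ I would choose $p\in\mathbb{Z}$ small enough that $X,Y\in\cd^{\geq p}$; any $Z\in X\ast Y$ then lies in $\cd^{\geq p}$ as well, so $\sigma^{\geq p}$ acts as the identity on $X,Y,Z$, and Lemma~\ref{lem:triangle-and-dimension-vector} yields $\uDim(Z)\leq x:=\uDim(X)+\uDim(Y)$. Decomposing $Z$ into indecomposable summands $Z_i$, each lies in the finite set $\Ind^{\leq x}\cd$ by Lemma~\ref{lem:equivalent-condition-for-D-discreteness}, and any object with $\uDim=0$ must vanish by boundedness of the $t$-structure; so the bound $\sum_i\uDim(Z_i)\leq x$ limits the multiplicities, and $X\ast Y$ is seen to have finitely many isoclasses. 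For (iv)$\Rightarrow$(iii), given $x$ with support $[l,r]$, iterated truncation triangles place any $X\in\Ind^x\cd$ into $\Sigma^{-l}\sigma^l(X)\ast\Sigma^{-l-1}\sigma^{l+1}(X)\ast\cdots\ast\Sigma^{-r}\sigma^r(X)$, and each $\sigma^p(X)\in\cd^0$ admits a composition series into simple objects of $\cd^0$ with multiplicities prescribed by $x^p$. Allowing all orderings of each such composition multiset, $X$ then lies in a finite union of iterated $\ast$'s of shifts of the finitely many simples of $\cd^0$, and cone-finiteness---propagated via the associativity of $\ast$---makes each such iterated $\ast$ finite.

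The proof of (i)$\Leftrightarrow$(ii) is dual. For the forward direction I would pick $l$ small enough that $Y,Z\in\cc_{\geq l}$, so that $W\in Y\ast Z$ lies in $\cc_{\geq l}$ as well; Corollary~\ref{cor:minimal-resolution} then forces $\beta_{\leq l-1}(W)=0$ and hence $\beta_{\geq l}(W)=W$ (and similarly for $Y$ and $Z$), so Lemma~\ref{lem:triangles-and-sums} yields $\uSum(W)\leq u:=\uSum(Y)+\uSum(Z)$, and $M$-discreteness finishes the count (using that $\uSum$ vanishes only on the zero object, again by Lemma~\ref{lem:minimal-resolution}). For the reverse direction, any $Y$ with $\uSum(Y)=u$ lies by Lemma~\ref{lem:minimal-resolution} inside the fixed iterated subcategory $\Sigma^{-r}M^r\ast\cdots\ast\Sigma^{-l}M^l$, where $M^p\in\add(M)$ is determined up to isomorphism by $\usum(M^p)=u^p$; cone-finiteness again bounds $\Ind_u\cc$. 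The main subtlety, and the sole genuine asymmetry between the two halves, is that the co-$t$-structure truncations $\beta_{\geq l}$ are not functorial: this is why in the forward direction of (i)$\Rightarrow$(ii) I would invoke Corollary~\ref{cor:minimal-resolution} to pin down $\beta_{\leq l-1}(W)$, rather than arguing by bare functoriality as one can for $\sigma^{\geq p}$.
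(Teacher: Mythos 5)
Your proof is correct, but it takes a genuinely different route from the paper. The paper outsources three of the four implications: (i)$\Leftrightarrow$(ii) is quoted from \cite[Theorem 4.2]{BroomheadPauksztelloPloog18}, (iii)$\Rightarrow$(iv) from \cite[Theorem 2.5(iii)]{BroomheadPauksztelloPloog18}, and (i)$\Leftrightarrow$(iii) is Theorem~\ref{thm:two-discretenesses}; the only implication proved by hand is (iv)$\Rightarrow$(iii), and even that is reduced (by an induction on $\udim$ very close to your composition-series argument) to showing that $\cd^0$ is abelian discrete, after which \cite[Corollary 2.6]{BroomheadPauksztelloPloog18} is invoked. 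You instead prove both auxiliary equivalences from scratch using only the paper's internal machinery: boundedness of the ($t$- and co-$t$-) structures to place $X\ast Y$ inside a single (co)aisle, Lemmas~\ref{lem:triangle-and-dimension-vector} and~\ref{lem:triangles-and-sums} for the subadditivity bounds, Lemma~\ref{lem:minimal-resolution} and Corollary~\ref{cor:minimal-resolution} for the weight decompositions and for the fact that $\uSum$ and $\uDim$ detect the zero object, and the associativity of $\ast$ to propagate cone-finiteness through iterated extensions. What the paper's version buys is brevity; what yours buys is a self-contained statement whose two halves are visibly dual (modulo the non-functoriality of $\beta_{\geq l}$, which you correctly flag and circumvent via Corollary~\ref{cor:minimal-resolution}), in the same spirit as the proof of Theorem~\ref{thm:two-discretenesses} itself. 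The only points worth spelling out if you write this up are the routine facts that $\uDim$ and $\uSum$ are additive over direct sums (for $\uSum$ this needs that minimal approximations of direct sums are direct sums of minimal approximations) and that an object of $\cc_{\geq l}$ has $\beta_{\leq l-1}=0$; both follow from the cited lemmas.
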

\begin{proof}
(i)$\Leftrightarrow$(ii) is \cite[Theorem 4.2]{BroomheadPauksztelloPloog18}, (i)$\Leftrightarrow$(iii) is Theorem~\ref{thm:two-discretenesses} and (iii)$\Rightarrow$(iv) is \cite[Theorem 2.5(iii)]{BroomheadPauksztelloPloog18}.

(iv)$\Rightarrow$(iii): Assume that $\cd$ is cone finite. We claim that for any $x\in K_0(\cd^0)$ the number of isoclasses of objects $X$ in $\cd^0$ with $\udim(X)=x$ is finite, \ie $\cd^0$ is abelian discrete in the sense of \cite[Section 2]{BroomheadPauksztelloPloog18}. Then it follows from \cite[Corollary 2.6]{BroomheadPauksztelloPloog18} that $\cd$ is $\cd^0$-discrete.

We prove the claim by induction on $x$. If $x$ is a standard basis element of $K_0(\cd^0)\cong\mathbb{Z}^n$, then $X$ must be simple and the claim is true. In general, take a simple subobject $S$ of $X$ and form the short exact sequence $0\to S\to X\to X'\to 0$. Then $x=\udim(S)+\udim(X')$. Moreover, the above short exact sequence yields a triangle $S\to X\to X'\to \Sigma S$ in $\cd$, and hence $X\in S\ast X'$. Thus all objects $X$ of $\cd^0$ with $\udim(X)=x$ belong to the subcategory $\cx=\bigcup S\ast X'$, where the union is over all isoclasses of simple objects $S$ of $\cd^0$ and all isoclasses of objects $X'$ with $\udim(X')=x-\udim(S)$. By induction hypothesis, this is a finite union. Since $\cd$ is cone finite, each $S\ast X'$ has finitely many isoclasses of objects. It follows that $\cx$ has only finitely many isoclasses of objects and the claim is true.
\end{proof}

Corollary~\ref{cor:cone-finiteness} shows the validity of \cite[Conjecture 2.7(iv)]{BroomheadPauksztelloPloog18} in our setting.

\section{Derived-discreteness along decollements}

In this section we recall the notion of derived-discreteness of a finite-dimensional algebra due to Vossieck \cite{Vossieck01}, and apply Theorem~\ref{thm:two-discretenesses} to recover the following result due to Qin~\cite{Qin16}. For basics on recollements, we refer to \cite{AngeleriKoenigLiuYang17}.

\begin{proposition}[{\cite[Proposition 6]{Qin16}}]
\label{prop:derived-discreteness-along-decollement}
Let $A,B,C$ be finite-dimensional $k$-algebras and assume that there is a recollement of $\cd(A)$ by $\cd(B)$ and $\cd(C)$. If $A$ is derived-discrete, then so are $B$ and $C$.
\end{proposition}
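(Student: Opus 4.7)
The plan is to reduce the assertion to a transport of cone-finiteness along the recollement. By Corollary~\ref{cor:cone-finiteness} applied to the ST-triple $(K^b(\proj A),\cd^b(\mod A),A_A)$, the derived-discreteness of $A$ (i.e.\ $(\mod A)$-discreteness of $\cd^b(\mod A)$) is equivalent to the cone-finiteness of $\cd^b(\mod A)$, and the analogous equivalences hold for $B$ and $C$. Thus it suffices to show that cone-finiteness propagates from $\cd^b(\mod A)$ to $\cd^b(\mod B)$ and $\cd^b(\mod C)$; Corollary~\ref{cor:cone-finiteness} then delivers derived-discreteness on the other side.

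To carry out the propagation, first assume that the given recollement of $\cd(A)$ restricts to a recollement of the bounded derived categories $(\cd^b(\mod B),\cd^b(\mod A),\cd^b(\mod C))$ with the usual six triangulated functors $(i^*,i_*,i^!,j_!,j^*,j_*)$. For the left factor, $i_*\colon \cd^b(\mod B)\hookrightarrow\cd^b(\mod A)$ is fully faithful, so for $X,Y\in\cd^b(\mod B)$ the subcategory $X\ast Y$ computed in $\cd^b(\mod B)$ embeds under $i_*$ into $i_*X\ast i_*Y$, which has finitely many isoclasses by hypothesis; hence $\cd^b(\mod B)$ is cone finite. For the right factor, exploit the fully faithful triangulated section $j_!\colon \cd^b(\mod C)\to\cd^b(\mod A)$ of the Verdier quotient $j^*$: given $X,Y\in\cd^b(\mod C)$ and $Z\in X\ast Y$ with defining triangle $X\to Z\to Y\to \Sigma X$, applying $j_!$ yields a triangle $j_!X\to j_!Z\to j_!Y\to \Sigma j_!X$, so $j_!Z\in j_!X\ast j_!Y$. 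Full faithfulness of $j_!$ reflects isomorphisms, so the assignment $Z\mapsto j_!Z$ is injective on isoclasses; hence $|X\ast Y|\le |j_!X\ast j_!Y|<\infty$, and $\cd^b(\mod C)$ is cone finite as well.

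The main obstacle is the step just glossed over: verifying that the recollement of the unbounded derived categories restricts to a recollement at the bounded level $\cd^b(\mod)$. This is not automatic, but for recollements between derived categories of finite-dimensional algebras over a field such restriction results are by now classical (see, e.g., \cite{AngeleriKoenigLiuYang17}), and they provide exactly the six bounded functors used above. An alternative is to remain inside the ST-triple framework of Proposition~\ref{prop:ST-pair-for-non-positive-dg-algebra}, working with $\per$ and $\cd_{fd}$ of appropriate (possibly dg) algebras associated to $B$ and $C$; the same two observations---the fully faithful inclusion $i_*$ and the injectivity-via-$j_!$ trick---then transport cone-finiteness without any further modification, after which Corollary~\ref{cor:cone-finiteness} concludes.
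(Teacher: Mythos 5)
Your reduction to cone-finiteness via Corollary~\ref{cor:cone-finiteness} and your treatment of $B$ match the paper: the middle left functor $i_*$ really does restrict to a fully faithful triangle functor $\cd^b(\mod B)\to\cd^b(\mod A)$ (since $i_*$ preserves coproducts, its left adjoint $i^*$ preserves compacts, so $H^n(i_*X)\cong\Hom_{\cd(B)}(i^*A,\Sigma^nX)$ is finite-dimensional and vanishes for $|n|\gg 0$), and a fully faithful triangle functor transports cone-finiteness back to its source. The gap is in your treatment of $C$. A recollement of $\cd(A)$ by $\cd(B)$ and $\cd(C)$ does \emph{not} in general restrict to a recollement of the bounded derived categories; \cite{AngeleriKoenigLiuYang17} is devoted precisely to characterising when such restrictions exist, not to asserting that they always do. Concretely, $j_!\cong ?\otimes^{L}_{C}j_!(C)$ with $j_!(C)\in\per(A)$, and if $X$ is a $C$-module of infinite projective dimension then $X\otimes^{L}_{C}j_!(C)$ may have nonzero cohomology in infinitely many degrees, so $j_!$ need not carry $\cd^b(\mod C)$ into $\cd^b(\mod A)$. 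Your inequality $|X\ast Y|\leq|j_!X\ast j_!Y|$ then compares against a subcategory of $\cd(A)$ about which the cone-finiteness of $\cd^b(\mod A)$ says nothing, so the argument for $C$ does not close.

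The paper's fix, which your closing ``alternative'' gestures at but does not carry out, is to change sides for $C$: since $j^*$ preserves coproducts, $j_!$ preserves compact objects and hence restricts to a fully faithful triangle functor $K^b(\proj C)\to K^b(\proj A)$. One then needs that derived-discreteness of $A$ is equivalent to cone-finiteness of $K^b(\proj A)$, and likewise for $C$ --- this is Corollary~\ref{cor:equivalences-of-der-discrentess-and-K-discreteness}, i.e.\ exactly the place where Theorem~\ref{thm:two-discretenesses} enters --- and transports cone-finiteness from $K^b(\proj A)$ to $K^b(\proj C)$. In other words, the asymmetry between the two factors of the recollement ($i_*$ is controlled on the $\cd^b(\mod)$ side, $j_!$ only on the $K^b(\proj)$ side) is the whole point of the proposition, and your proof needs this change of sides to be repaired.
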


\subsection{Derived-discreteness}\label{ss:der-discreteness}

Let $A$ be a finite-dimensional $k$-algebra.

For $X\in K^{b}(\proj A)$, take $Y$ minimal such that $Y\cong X$ in $K^{b}(\proj A)$. Define $\uSum(X)=(\usum(Y^{p}))_{p\in \mathds{Z}}$, which belongs to $(K^{\sp}_{0}(\proj A)^+)^{\oplus \mathds{Z}}$.
The algebra $A$ is called \emph{$K^{b}(\proj)$-discrete} if the number of isoclasses of indecomposable objects of $K^b(\proj A)$ with $\uSum(X)=u$ is finite for any $u\in {K^{\sp}_{0}(\proj A)}^{\oplus \mathds{Z}}$. It is easy to see that this is exactly the $A_A$-discreteness in the sense of Definition~\ref{defn:discreteness-with-respect-to-silting}.

Applying Corollary~\ref{cor:cone-finiteness} to the ST-triple $(K^b(\proj A),\cd^b(\mod A),A_A)$, we immediately obtain the following corollary which completes \cite[Corollary 4.4]{BroomheadPauksztelloPloog18}.

\begin{corollary}\label{cor:equivalences-of-der-discrentess-and-K-discreteness}
The following conditions are equivalent:
\begin{itemize}
\item[(i)] $A$ is $K^b(\proj)$-discrete,
\item[(ii)] $K^b(\proj A)$ is cone finite,
\item[(iii)] $A$ is derived-discrete,
\item[(iv)] $\cd^b(\mod A)$ is cone finite.
\end{itemize}
\end{corollary}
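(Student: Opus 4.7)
The plan is to obtain Corollary~\ref{cor:equivalences-of-der-discrentess-and-K-discreteness} as an immediate specialisation of Corollary~\ref{cor:cone-finiteness} to the prototypical ST-triple
\[
(\cc,\cd,M)=(K^b(\proj A),\cd^b(\mod A),A_A)
\]
inside $\cd^b(\mod A)$. This triple was identified in Section~\ref{s:ST-triple-and-discreteness} as an ST-triple, so the four-way equivalence of Corollary~\ref{cor:cone-finiteness} applies verbatim. All that remains is to translate the two discreteness conditions into their classical incarnations; the two cone-finiteness conditions require no translation since cone-finiteness is already a category-theoretic property that reads the same on both sides.

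First I would check the silting-side translation. For the ST-triple above, the silting object is $A_A$ and the co-heart is $\add(A_A)=\proj A$, so $K_0^{\sp}(\add(A_A))=K_0^{\sp}(\proj A)$. I would observe that for $X\in K^b(\proj A)$ the minimal representative $Y$ of $X$ realises a filtration $X\in \Sigma^{-r}Y^r\ast\cdots\ast \Sigma^{-l}Y^l$ of the kind appearing in Lemma~\ref{lem:minimal-resolution}; by Corollary~\ref{cor:minimal-resolution} the $M^p$ supplied by Lemma~\ref{lem:minimal-resolution} are then the $Y^p$ themselves (a direct summand of a minimal representative is itself). Hence the two notions $\uSum$ defined in Section~\ref{ss:der-discreteness} and in Section~\ref{sss:discreteness-wrt-silting} coincide on $K^b(\proj A)$, so (i) of the corollary is exactly condition (i) of Corollary~\ref{cor:cone-finiteness}, as already noted in Section~\ref{ss:der-discreteness}.

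Next I would handle the $t$-structure-side translation. For this ST-triple the $t$-structure on $\cd=\cd^b(\mod A)$ described in Theorem~\ref{thm:basic-properties-of-ST-triple}(c) is the standard one, whose heart $\cd^0$ is canonically equivalent to $\mod A$, as recalled in Section~\ref{ss:t-str}. Under this equivalence the cohomology functors $\sigma^p$ correspond to the usual cohomology functors $H^p$, and $\udim\sigma^p(X)$ corresponds to $\udim H^p(X)$. Consequently $\cd^0$-discreteness in the sense of Definition~\ref{defn:discreteness-wrt-t-structure} is literally derived-discreteness in the sense of Vossieck, identifying (iii) of the corollary with condition (iii) of Corollary~\ref{cor:cone-finiteness}.

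With these two identifications in hand, the chain (i)$\Leftrightarrow$(ii)$\Leftrightarrow$(iii)$\Leftrightarrow$(iv) is exactly Corollary~\ref{cor:cone-finiteness} applied to $(K^b(\proj A),\cd^b(\mod A),A_A)$ and the proof is complete. There is no real obstacle: the only mildly delicate point is confirming that the $\uSum$ computed from a minimal complex matches the $\uSum$ defined abstractly via Lemma~\ref{lem:minimal-resolution}, and this is a direct consequence of the uniqueness/minimality statement of that lemma together with Corollary~\ref{cor:minimal-resolution}.
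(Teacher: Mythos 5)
Your proposal is correct and takes essentially the same route as the paper: the paper's proof consists precisely of applying Corollary~\ref{cor:cone-finiteness} to the ST-triple $(K^b(\proj A),\cd^b(\mod A),A_A)$, with the identifications of $K^b(\proj)$-discreteness with $A_A$-discreteness and of Vossieck's derived-discreteness with $\cd^0$-discreteness having been recorded beforehand (in Section~\ref{ss:der-discreteness} and in the example of Section~\ref{sss:discreteness-wrt-t-str}) as ``easy to see''. Your verification of those two translations, via minimal complexes and Corollary~\ref{cor:minimal-resolution} on one side and the standard $t$-structure with heart $\mod A$ on the other, simply spells out details the paper leaves implicit.
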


\subsection{Derived-discreteness is preserved along decollements}

In this subsection we prove Proposition~\ref{prop:derived-discreteness-along-decollement}.

\begin{proof}[Proof of Proposition~\ref{prop:derived-discreteness-along-decollement}]
Assume that $A$ is derived-discrete. By Corollary~\ref{cor:equivalences-of-der-discrentess-and-K-discreteness}, both $\cd^b(\mod A)$ and $K^b(\proj A)$ are cone finite. In the given recollement the middle left functor  restricts to a fully faithful triangle functor $\cd^b(\mod B)\to \cd^b(\mod A)$ and the upper right functor restricts to a fully faithful triangle functor $K^b(\proj C)\to K^b(\proj A)$. Hence both $\cd^b(\mod B)$ and $K^b(\proj C)$ are cone finite. By Corollary~\ref{cor:equivalences-of-der-discrentess-and-K-discreteness} again, both $B$ and $C$ are derived-discrete.
\end{proof}

In the rest of this subsection we give an alternative proof of Proposition~\ref{prop:derived-discreteness-along-decollement} using the equivalence Corollary~\ref{cor:equivalences-of-der-discrentess-and-K-discreteness}(i)$\Leftrightarrow$(iii) only. Note that using the full Corollary~\ref{cor:equivalences-of-der-discrentess-and-K-discreteness} both Lemma~\ref{lem:subcategories-inherit-der-discreteness} and Lemma~\ref{lem:subcategories-inherits-K-discreteness} are easy to obtain.

The alternative proof for $B$ being derived-discrete is the same as that in \cite{Qin16}, which relies on the following result appeared in the paragraph before \cite[Proposition 6]{Qin16}. The idea of the proof is the same as Vossieck's proof of the fact that derived-discreteness is preserved under derived equivalence (\cite[Proposition 1.1]{Vossieck01}).  Here we give full details. Let $A$ and $B$ be finite-dimensional $k$-algebras in the next two lemmas.

\begin{lemma}\label{lem:subcategories-inherit-der-discreteness}
Assume that  $F\colon \mathcal{D}^{b}(\mod B)\rightarrow \mathcal{D}^{b}(\mod A)$ is a fully faithful triangle functor. If $A$ is derived-discrete, so is $B$.
\end{lemma}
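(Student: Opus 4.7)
The plan is to follow Vossieck's original argument \cite[Proposition~1.1]{Vossieck01}. First, since $F$ is fully faithful and both $\cd^b(\mod B)$ and $\cd^b(\mod A)$ are Krull--Schmidt, the induced isomorphisms on endomorphism rings imply that $F$ preserves indecomposability (an object in a Krull--Schmidt category is indecomposable iff its endomorphism ring is local) and reflects isomorphisms. Hence $X\mapsto FX$ gives an injection on isoclasses of indecomposable objects.

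Next, I would bound $\uDim(FX)$ in terms of $\uDim(X)$. Let $S_1,\ldots,S_n$ be the simple $B$-modules, and for $x\in(K_0(\mod B)^+)^{\oplus\mathbb{Z}}$ with $x^p=\sum_i n_i^p[S_i]$ define
\[
\Phi(x):=\sum_{i,p}n_i^p\cdot\uDim(\Sigma^{-p}F(S_i))\in(K_0(\mod A)^+)^{\oplus\mathbb{Z}}.
\]
I claim $\uDim(FX)\le\Phi(\uDim(X))$ for every $X\in\cd^b(\mod B)$, to be proved by induction on $\sum_{i,p}n_i^p$. For the inductive step, pick $p_0$ minimal with $H^{p_0}(X)\neq 0$; using the canonical isomorphism $\sigma^{\le p_0}X\cong\Sigma^{-p_0}H^{p_0}(X)$ together with any simple submodule $S\hookrightarrow H^{p_0}(X)$, I build a morphism $\Sigma^{-p_0}S\to X$. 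Completing to a triangle
\[
\Sigma^{-p_0}S\to X\to X'\to\Sigma^{-p_0+1}S,
\]
the long exact sequence in cohomology yields $\uDim(X')=\uDim(X)-\uDim(\Sigma^{-p_0}S)$: injectivity of $H^{p_0}$ of the first map together with the minimality of $p_0$ collapses the sequence into the short exact piece $0\to S\to H^{p_0}(X)\to H^{p_0}(X')\to 0$ and isomorphisms in all other degrees. Applying $F$, using subadditivity of $\uDim$ on triangles in $\cd^b(\mod A)$ (proved exactly as Lemma~\ref{lem:triangle-and-dimension-vector}), and invoking the induction hypothesis delivers the bound.

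Finally, combining the two steps, for each $x$ the isoclasses of indecomposable $X\in\cd^b(\mod B)$ with $\uDim(X)=x$ embed, via $F$, into the isoclasses of indecomposable $Y\in\cd^b(\mod A)$ with $\uDim(Y)\le\Phi(x)$. The latter is finite by Lemma~\ref{lem:equivalent-condition-for-D-discreteness} and the derived-discreteness of $A$, so $B$ is derived-discrete as well.

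The main obstacle is the bookkeeping in the inductive step: setting up the triangle so that $\uDim$ drops by exactly $\uDim(\Sigma^{-p_0}S)$, which crucially exploits the choice of the \emph{lowest} nonvanishing cohomology. Once this reduction is secured, everything else is a routine combination of subadditivity with fully faithfulness.
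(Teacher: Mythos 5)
Your proposal is correct and follows essentially the same route as the paper's proof: the same homomorphism $\Phi$ (the paper's $f$), the same induction on the total dimension via a triangle $\Sigma^{-p_0}S\to X\to X'\to\Sigma^{-p_0+1}S$ built from a simple subobject of the lowest nonvanishing cohomology, and the same conclusion via subadditivity of $\uDim$ on triangles together with Lemma~\ref{lem:equivalent-condition-for-D-discreteness}. The only cosmetic difference is that you make explicit the (correct) Krull--Schmidt argument for why $F$ induces an injection on isoclasses of indecomposables, which the paper leaves implicit.
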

\begin{proof} The triangle functor $F$ induces a group homomorphism
\[
f\colon K_{0}(\mbox{mod}B)^{\bigoplus \mathds{Z}}\longrightarrow K_{0}(\mbox{mod}A)^{\bigoplus \mathds{Z}}
\]
such that $f(\underline{\mbox{Dim}}(\Sigma^p S_{i}^{B}))=\underline{\mbox{Dim}}(\Sigma^p F(S_{i}^{B}))$ for a complete set $\{S_i^B\}$ of simple $B$-modules and $p\in\mathbb{Z}$.

We claim that $\underline{\mbox{Dim}}F(X)\leqslant f(\underline{\mbox{Dim}}(X))$ for any $X\in \mathcal{D}^{b}(\mbox{mod}B)$. It follows that $F$ induces an injective map $\Ind^x \cd^b(\mod B)\to \Ind^{\leqslant f(x)}\cd^b(\mod A)$, which is a finite set due to the derived-discreteness of $A$ and Lemma~\ref{lem:equivalent-condition-for-D-discreteness}. Thus $B$ is derived-discrete.
 
We prove the claim by induction on $x:=\underline{\mbox{Dim}}(X)$. Recall from Step 2 of the proof of Proposition~\ref{prop:D-discreteness-implies-M-discreteness} that there is a triangle in $\cd^b(\mod A)$:
\[
\Sigma^{-1}X'\stackrel{g}{\longrightarrow} \Sigma^{-l}S\stackrel{f}{\longrightarrow} X\longrightarrow X'
\] 
such that $x=x'+x''$, where $x'=\uDim(X')$ and $x''=\uDim(\Sigma^{-l}S)$. By applying $F$ to this triangle and inspecting the associated long exact sequence of cohomologies, we obtain an inequality
\[
\uDim F(X)\leqslant \uDim F(\Sigma^{-l}S)+\uDim F(X').
\]
By induction hypothesis we have $\uDim F(X')\leqslant f(x')$. Since $\uDim F(\Sigma^{-l}S)= f(x'')$, it follows that $\uDim F(X)\leqslant f(x)$, as claimed.
\end{proof}

The following result is dual to Lemma~\ref{lem:subcategories-inherit-der-discreteness}.

\begin{lemma}\label{lem:subcategories-inherits-K-discreteness}
Assume that $G\colon K^{b}(\proj B)\rightarrow K^{b}(\proj A)$is a fully faithful triangle functor. If $A$ is $K^b(\proj)$-discrete, so is $B$.
\end{lemma}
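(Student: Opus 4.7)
The plan is to mirror the proof of Lemma~\ref{lem:subcategories-inherit-der-discreteness} dually, replacing the $t$-structure data ($\uDim$, simples, cohomology truncations) with the co-$t$-structure data ($\uSum$, indecomposable projectives, the approximation triangles of Lemma~\ref{lem:minimal-resolution}). Let $P_1^B,\ldots,P_m^B$ be a complete set of indecomposable projective $B$-modules. The fully faithful triangle functor $G$ induces a group homomorphism
\[
g\colon K_0^{\sp}(\proj B)^{\oplus\mathbb{Z}}\longrightarrow K_0^{\sp}(\proj A)^{\oplus\mathbb{Z}}
\]
sending $\uSum(\Sigma^p P_i^B)$ to $\uSum(\Sigma^p G(P_i^B))$ for $1\leqslant i\leqslant m$ and $p\in\mathbb{Z}$; it carries the positive cone into itself.

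The key estimate to prove is $\uSum G(Y)\leqslant g(\uSum Y)$ for every $Y\in K^b(\proj B)$. Granting this, $G$ induces an injection $\Ind_u K^b(\proj B)\to \Ind_{\leqslant g(u)}K^b(\proj A)$, with injectivity automatic from full faithfulness (which forces $G$ to reflect isomorphisms and to send indecomposables to indecomposables in the Krull--Schmidt setting). The target is then finite by Lemma~\ref{lem:equivalent-condition-for-M-discreteness} together with the $K^b(\proj)$-discreteness of $A$, so $B$ is $K^b(\proj)$-discrete. To establish the estimate, I will induct on the total number $n(Y)$ of indecomposable projective summands appearing in the minimal complex of $Y$, a finite non-negative integer since $Y\in K^b(\proj B)$ is bounded. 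The base case $n(Y)=1$, i.e.~$Y\cong \Sigma^{-p}P_i^B$, gives equality by construction of $g$. For the induction step, let $r$ be the top degree of the minimal complex of $Y$ and apply Lemma~\ref{lem:minimal-resolution} with $l=r$ to obtain a triangle
\[
\Sigma^{-r}M^r\longrightarrow Y\longrightarrow \beta_{\leqslant r-1}(Y)\longrightarrow \Sigma^{-r+1}M^r
\]
in $K^b(\proj B)$ with $M^r\in\add(B_B)$, so that $\uSum Y = \uSum(\Sigma^{-r}M^r) + \uSum\beta_{\leqslant r-1}(Y)$ and $n(\beta_{\leqslant r-1}(Y))<n(Y)$. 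Applying $G$ yields a triangle in $K^b(\proj A)$ to which Lemma~\ref{lem:triangles-and-sums} applies (for $l$ taken small enough that all $\beta_{\leqslant l-1}$ vanish), giving
\[
\uSum G(Y) \leqslant \uSum G(\Sigma^{-r}M^r) + \uSum G(\beta_{\leqslant r-1}(Y)).
\]
The first summand equals $g(\uSum(\Sigma^{-r}M^r))$ by the base case together with the obvious additivity of $\uSum$ on direct sums of indecomposable projectives, while the second is $\leqslant g(\uSum\beta_{\leqslant r-1}(Y))$ by induction hypothesis; adding these yields $\uSum G(Y)\leqslant g(\uSum Y)$.

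The main obstacle is that $G$ need not preserve the co-$t$-structures: $G(B_B)$ is a silting object only inside the essential image of $G$, not inside $K^b(\proj A)$, so one cannot naively transport the approximation triangles of Lemma~\ref{lem:minimal-resolution} across $G$, and there is no reason for $\beta_{\geqslant l}G(Y)$ to be comparable to $G\beta_{\geqslant l}(Y)$. The way around this is to perform the decomposition entirely on the $B$-side, apply the functor, and then measure $\uSum$ on the $A$-side only via the inequality-form Lemma~\ref{lem:triangles-and-sums}, which is insensitive to how the triangle arose — exactly the reason that lemma was stated for arbitrary triangles rather than minimal ones. The choice of integer-valued induction variable $n(Y)$ avoids having to well-order the non-Archimedean cone $(K_0^{\sp}(\proj B)^+)^{\oplus\mathbb{Z}}$ directly.
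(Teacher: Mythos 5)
Your proposal is correct and follows essentially the same route as the paper: both define the same homomorphism $g$, reduce everything to the estimate $\uSum G(Y)\leqslant g(\uSum Y)$, and prove that estimate by splitting off the top degree of the minimal complex via a triangle, applying $G$, and invoking the subadditivity of Lemma~\ref{lem:triangles-and-sums} together with an induction. The only cosmetic differences are that the paper peels off a single indecomposable summand $P$ of the top component per step and inducts on $\uSum(X)$ in the cone, whereas you peel off the entire top component $\Sigma^{-r}M^r$ at once and induct on the integer $n(Y)$.
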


\begin{proof} The triangle functor $G\colon K^{b}(\proj B)\longrightarrow K^{b}(\proj A)$ induces a group homomorphism
\[
g\colon K_{0}^{\sp}(\proj B)^{\oplus \mathbb{Z}}\longrightarrow K_{0}^{\sp}(\proj A)^{\oplus \mathbb{Z}}
\]
such that $g(\uSum(\Sigma^p P_{i}^{B}))=\uSum(\Sigma^p G(P_{i}^{B}))$ for a complete set $\{P_i^B\}$ of indecomposable projective $B$-modules and $p\in\mathbb{Z}$.

We claim that $\uSum G(X)\leqslant g(\uSum(X))$ for any $X\in K^{b}(\proj B)$. It follows that $G$ induces an injective map $\Ind_u K^b(\proj B)\to \Ind_{\leqslant g(u)}K^b(\proj A)$, which is a finite set due to the $K^b(\proj)$-discreteness of $A$ and Lemma~\ref{lem:equivalent-condition-for-M-discreteness}. Thus $B$ is $K^b(\proj)$-discrete.

We prove the claim by induction on $u:=\uSum(X)$.
We may assume that $X$ is minimal. Let $r$ be the minimal integer such that $X^i=0$ for all $i> r$ and take an indecomposable direct summand $P$ of $X^{r}$.Then $\Sigma^{-r} P$ is a subcomplex of $X$ and there is a triangle
\[
\Sigma^{-r} P\longrightarrow X\longrightarrow X'\longrightarrow \Sigma^{-r+1} P
\]
with $\underline{\mbox{Sum}}(X')=\underline{\mbox{Sum}}(X)-\underline{\mbox{Sum}}(\Sigma^{-r} P)$. Applying $G$ to this triangle yields a triangle in $K^{b}(\proj A)$
\[
G(\Sigma^{-r} P)\longrightarrow G(X)\longrightarrow G(X')\longrightarrow G(\Sigma^{-r+1} P).
\]
Therefore
\begin{align*}
\underline{\mbox{Sum}} G(X) &\leqslant \underline{\mbox{Sum}}(G(\Sigma^{-r} P))+\underline{\mbox{Sum}}(G(X'))\\
&\leqslant g(\underline{\mbox{Sum}}(\Sigma^{-r} P))+g(\underline{\mbox{Sum}}(X'))\\
&= g(\underline{\mbox{Sum}}(\Sigma^{-r} P)+\underline{\mbox{Sum}}(X'))\\
&=g(\underline{\mbox{Sum}}(X)).
\end{align*}
Here the first inequality follows from Lemma~\ref{lem:triangles-and-sums} and the second one is by induction hypothesis.
\end{proof}

\begin{proof}[Alternative proof of Proposition~\ref{prop:derived-discreteness-along-decollement}]
The middle left functor in the given recollement restricts to a fully faithful triangle functor $\cd^b(\mod B)\to \cd^b(\mod A)$. So by Lemma~\ref{lem:subcategories-inherit-der-discreteness}, $B$ is derived-discrete.

Similarly, there is a fully faithful triangle functor $K^b(\proj C)\to K^b(\proj A)$. By Corollary~\ref{cor:equivalences-of-der-discrentess-and-K-discreteness}(i)$\Leftrightarrow$(iii), $A$ is $K^b(\proj)$-discrete. It follows from Lemma~\ref{lem:subcategories-inherits-K-discreteness} that $C$ is $K^b(\proj)$-discrete. By Corollary~\ref{cor:equivalences-of-der-discrentess-and-K-discreteness}(i)$\Leftrightarrow$(iii) again, $C$ is derived-discrete.
\end{proof}


\def\cprime{$'$}
\providecommand{\bysame}{\leavevmode\hbox to3em{\hrulefill}\thinspace}
\providecommand{\MR}{\relax\ifhmode\unskip\space\fi MR }
\providecommand{\MRhref}[2]{%
  \href{http://www.ams.org/mathscinet-getitem?mr=#1}{#2}
}
\providecommand{\href}[2]{#2}

\end{document}